\theoremstyle{plain}
\newtheorem{Thm}{Theorem}
\newtheorem*{Thm*}{Theorem} % Pour un théorème pas numéroté
\newtheorem{Lem}{Lemma}
\theoremstyle{definition}
\newtheorem{Def}{Definition}
\theoremstyle{remark}
\newtheorem{Rem}{Remark}[section]
\newtheorem{Exe}{Example}
\DeclareMathOperator*{\E}{\mathbb{E}}
\DeclareMathOperator*{\Var}{Var}
\DeclareMathOperator*{\Exp}{Exp}
\DeclareMathOperator*{\vectspan}{span}
\title{On the existence of fractional Brownian fields indexed by manifolds with closed geodesics}
\author{Nil Venet\footnote{Institut de mathématiques de Toulouse, nil.venet@math.univ-toulouse.fr}}
\begin{document} 
\maketitle

\begin{abstract}
	We give a necessary condition of geometric nature for the existence of the $H$-fractional Brownian field indexed by a Riemannian manifold. \\ In the case of the Lévy Brownian field ($H=1/2$) indexed by manifolds with minimal closed geodesics it turns out to be very strong. In particular we show that compact manifolds admitting a Lévy Brownian field are simply connected.
	
	We also derive from our result the nondegenerescence of the Lévy Brownian field indexed by hyperbolic spaces.
	
	These results stress the need for alternative kernels on nonsimply connected manifolds to allow for Gaussian modelling or kernel machine learning of functional data with manifold-valued entries.
\end{abstract}

\section{Introduction}

When it comes to handling functional data, Gaussian random processes have become an unavoidable class of models.
If a considerable literature exists in the Euclidean case, applications call for random models of functions defined on more general spaces.
However the study of Gaussian random fields indexed by metric spaces such as Riemannian manifolds or discrete spaces is still in progress,
and we do not always have satisfying models in these cases.
In particular a definition of the fractional Brownian motion indexed by any metric space was given by Istas in \cite{istas2011manifold},
but in general the existence of those fields remains an open question.

Fractional Brownian motion is a popular Gaussian random process. It is characterised by a positive parameter $H$ (the so-called Hurst exponent) that governs the roughness of its sample paths, together with a ``long-range dependence" quality.
This allows for example to fit a model on time series with long memory, or on spatial data presenting textures.
Those qualities participate in the wide success of this process in a variety of applied domains since the initial article of Mandelbrot and Van Ness (\cite{MandelbrotVanNess}), and motivate the research of an extension to non Euclidean spaces.
Let us recall that the case $H=1/2$ corresponds to the Brownian motion: it is most natural to look for generalisations of that standard Gaussian process.

Moreover, the existence of the $H$-fractional Brownian field is equivalent to the negative definiteness of the kernel $d^{2H}$, which in turn implies the positive definiteness of kernels that are crucial for ``kernel method" machine learning of nonlinear data (see for example \cite{scholkopf2002learning}).

Fractional Brownian fields do not always exist. Furthermore little is understood from the links between their existence and the structure of the index space. In \cite{istas2011manifold} Istas remarks that there exists a \emph{fractional index} $\beta_E \in [0, \infty]$ depending on the index space $(E,d)$ such that an $H$-fractional Brownian field indexed by $E$ exists if and only if $2H \leq \beta_E$. It is known that $\beta_{\mathcal{H}}=2$ for any Hilbert space $\mathcal{H}$ and the value of $\beta_M$ has been found for symmetric spaces such as the spheres, and the hyperbolic spaces ($\beta_{\mathbb{S}^d}=\beta_{\mathbb{H}^d}=1$) (see \cite{istas2011manifold}), through harmonic analysis techniques (see \cite{gangolli67}, \cite{molchan67}, \cite{faraut74}) or direct geometric constructions (see \cite{levy1959sphere}, \cite{chentsov1957}, \cite{takenaka1981brownian}, \cite{takenaka87}, and \cite{lifshits1980representation}). Moreover the fractional index of the cylinder $\mathbb{S}^1\times \mathbb{R}$ is zero (see \cite{Venet_cylinder}). These questions have been of interest since Lévy who prove the existence of the Brownian motion indexed by the Euclidean spaces in \cite{levyprocessus1948} and by the sphere in \cite{levy1959sphere}. However in the general case of a manifold $M$ endowed with a Riemannian metric there are few results: Istas \cite{istas2011manifold} showed that $\beta_M \leq 2$ in this case, while in the more general setting of a length-space, Feragen \textit{et al}. \cite{feragen} have proven that $\beta_M=2$ if and only if $M$ is flat. Since the fractional exponent of the circle $\beta_{\mathbb{S}^1}$ is equal to $1$, the existence of minimal closed geodesics in $M$ yields $\beta_M \leq 1$. Furthermore Morozova and Chentsov \cite{morozova68} gave a necessary condition to have $\beta_M=1$ in this case.\\

We give in this paper a geometric necessary condition for the existence of the $H$-fractional Brownian field $X^H$ indexed by a Riemannian manifold $(M,d_M)$. It appears when there exist some points $P_1\cdots,P_n$ and coefficients $c_1,\cdots,c_n$ with $\sum_{i=1}^n c_i=0$ such that $\sum_{i=1}^n c_i X^H_{P_i}=0 ~ a.s.$ (which is equivalent to $\sum_{i,j=1}^n c_i c_j d^{2H}(P_i,P_j)=0$). For the $H$-fractional Brownian field to exist in such a configuration the minimal geodesics connecting the $\left(P_i\right)_{i=1}^n$ together must span the whole tangent space with their speeds at any $P_i$ (see Theorem 1 for a rigorous statement).

On the circle $\mathbb{S}^1$ we have infinitely many configurations of four points verifying those conditions for $H=1/2$, given by any couple of pairs of antipodal points. This allows us to give a stronger necessary condition for the existence of Lévy Brownian field indexed by a manifold with at least a minimal closed geodesic (Theorem \ref{thmcercle}). This condition turns out to be very strong: in particular those manifolds cannot possess a loop of minimal length amongst all loops with nontrivial free homotopy (Theorem \ref{thmlacetminimal}). From this we conclude that any compact manifold admitting a Lévy Brownian field is simply connected (Theorem \ref{thmcompact}). Let us highlight that this result relies on the topology of the manifold, independently of the chosen Riemannian metric. We also give several examples of general situations where our result show the nonexistence of Lévy Brownain field in Section \ref{sec:ex}. 

Contrapositive of Theorem 1 gives a geometric proof of the nondegenerescence of the Lévy Brownian field on the real hyperbolic spaces (Theorem \ref{thmnondegen}).

\paragraph{Structure of the article} We start by some generalities and detail our motivations in Section \ref{sec:generalities}, give the main result in Section \ref{sec:main} while Section \ref{sec:closedminimalgeodesic} covers the case of manifolds with minimal closed geodesics and Section \ref{sec:nondegen} the nondegenerescence of Lévy Brownian field indexed by hyperbolic spaces.

\section{Generalities} \label{sec:generalities}

In this article we consider metric spaces $(E,d)$ and study the negative definiteness property for the functions $d^{2H}(x,y)$, where $H$ is a positive parameter.

The metric spaces we consider are Riemannian manifolds endowed with their geodesic distance, and in particular Riemannian manifold with at least a minimal closed geodesic.

In practice, we are looking for the \emph{fractional index} $\beta_E$ of the metric space, which is defined as the supremum of the parameters $H$ such that $d^{2H}$ is negative definite. This index is of particular interest because the function $d^{2H}$ is negative definite if and only if
\begin{equation} 2H \leq \beta_E. \end{equation}
This problematic is motivated by existence problems for fractional Brownian fields and stationary random fields indexed by $(E,d)$, which depend on the negative definiteness of $d^{2H}$. This property also gives the positive definiteness of kernels that are crucial for machine learning of nonlinear data.

In this section we recall some generalities and give details about these motivations.

\paragraph{Positive and negative definite kernels} Given a set $S$, we say that a symmetric function $f:S\times S \rightarrow \mathbb{R}$ is a \emph{positive definite kernel} if for every $x_1,\cdots,x_n \in S$ and every $\lambda_1,\cdots,\lambda_n \in \mathbb{R}$,
\begin{equation} \label{eq:positive_definite_kernel} \sum_{i,j=1}^n \lambda_i \lambda_j f(x_i,x_j) \geq 0. \end{equation}

Positive definite kernels are the covariances of random fields indexed by $S$. In particular, there exists a centred Gaussian random field indexed by $S$ with covariance $f$ if and only if $f$ is a positive definite kernel (see for instance \cite{Lifshitsbook}). Furthermore they are a key ingredient to machine learning of nonlinear data, as the positive definiteness of $f$ is equivalent to the existence of an Hilbert space $\mathcal{H}$ and a map $\Phi : S \rightarrow \mathcal{H}$ (the ``feature map") such that
\begin{equation} \label{eq:feature_map} f(x,y)=\langle \Phi(x),\Phi(y) \rangle_{\mathcal{H}}, \end{equation}
which guaranties that $f$ can play the role of a scalar product to allow for every linear machine learning method (see \cite{scholkopf2002learning}).

Positive definite kernels are closely related to negative definite kernels (see for example \cite{Berg_al}): a symmetric function $f$ is said to be a \emph{negative definite kernel} if for every $x_1,\cdots,x_n \in S$ and every $c_1,\cdots,c_n \in \mathbb{R}$ such that $\sum_{i=1}^n c_i = 0$,
\begin{equation} \label{eq:negative_definite_kernel} \sum_{i,j=1}^n c_i c_j f(x_i,x_j) \leq 0. \end{equation}

\paragraph{Fractional Brownian fields}
Given a metric space $(E,d)$ and $H>0$, we recall that an \emph{$H$-fractional Brownian field} indexed by $E$ is a centred, real-valued, Gaussian random field $(X_x)_{x\in E}$ such that
\begin{equation} \label{eq:defgen} \forall x,y \in E, ~ \E \left( X_x-X_y\right)^2=\left[d(x,y)\right]^{2H}. \end{equation}

This definition does not yield uniqueness (in law) of the field. Indeed for $N$ a centred Gaussian random variable, if $(X_t)$ is an $H$-fractional Brownian field indexed by $E$ then so is $(N+X_t)$. It is classical to define for any point $O \in E$ the \emph{$H$-fractional Brownian field with origin in $O$} by requiring also that $X_O$ be equal to $0$ almost surely. If it exists one can check that the covariance is then
\begin{equation} \label{cov} \E(X_xX_y)=\frac{1}{2}\left( d^{2H}(O,x)+d^{2H}(O,y)-d^{2H}(x,y)\right),\end{equation}
hence the uniqueness of the law of the field. Moreover the existence of the fractional Brownian field with origin in $O$ is equivalent to the positive definiteness of \eqref{cov}. A theorem of Schoenberg (see for example \cite{istas2011manifold}) proves that it is the case if and only if $d^{2H}$ is a negative definite kernel. Notice that this property does not depend on the origin $O$, and that any Gaussian field verifying \eqref{eq:defgen} is obtained by addition of a normal random variable to  an $H$-fractional with origin in an arbitrary $O \in E$: the negative definiteness of $d^{2H}$ is equivalent to the existence of every $H$-fractional Brownian field indexed by $(E,d)$.

\begin{Rem} \label{Rem:stable} In \cite{istas2011manifold} Istas define an $\alpha$-stable $H$-fractional field indexed by a metric space. Unlike in the Gaussian case, positive definiteness of the covariance is not sufficient to guaranty the existence of this field, but it is still necessary that $d^{2H\alpha}$ be negative definite: studying the negative definiteness of the powers of $d$ is also a first step for fractional non Gaussian modelling.
\end{Rem}

\paragraph{Stationary kernels} Furthermore when $d^{2H}$ is negative definite, for every \emph{completely monotone function} $F: \mathbb{R}^+\rightarrow \mathbb{R}^+$,
\begin{equation} \label{eq:statio_kernels} (x,y) \mapsto F\left(d^{2H}(x,y)\right)\end{equation}
is a positive definite kernel (see for instance \cite{Berg_al}). Let us recall that a function $F$ is completely monotone if and only $(-1)^n F^{(n)}(t) \geq 0$ for every $t\in \mathbb{R}^+$ and $n\in \mathbb{N}$.
Since the kernels in \eqref{eq:statio_kernels} depend only on the distance, they are the covariances of \emph{stationary} Gaussian random fields. These are first-choice random models for functions over $E$, whose random behaviour is homogeneous with respect to the geometry of $(E,d)$.

Positive definite kernels that are functions of a distance are also of crucial importance in kernel machine learning, since by replacing a scalar product in learning methods a kernel plays the role of a ``proximity measure". Examples of completely monotone functions include $t\mapsto e^{-\lambda t}$ for every positive $\lambda$. In particular, when they exist $e^{-\lambda d(x,y)}$ and $e^{-\lambda d^2(x,y)}$ generalise the exponential and the Gaussian kernel families.

\paragraph{Fractional index} It is a striking fact that for every metric space $(E,d)$ there exists $\beta_E$ in $[0,+\infty]
$ such that for every positive $H$,  $d^{2H}$ is negative definite if and only if (see Istas \cite{istas2011manifold})

\begin{equation}\label{eq:fractionalindex} 2H\leq \beta_E.\end{equation}

The number $\beta_E$ is called the fractional index of $(E,d)$ and is in general not easy to compute. Let us stress out some general facts which follow directly from the definition of $\beta_E$, and that we will use later.

\begin{Rem} \label{Rem:subspace} Given a metric space $(E,d)$ and $F \subset E$, if we consider $F$ as a metric space endowed with the restriction $d_{|F}$ of the distance $d$ to $F$, we have $\beta_F \geq \beta_E$.
\end{Rem}
\begin{Rem} \label{Rem:homo} For a positive $\lambda$, multiplying the distance on $E$ by $\lambda$ does not change the fractional index $\beta_E$.
\end{Rem}

\paragraph{Riemannian manifolds}
The metric spaces we consider in this article are Riemannian manifolds endowed with their geodesic distance.  Following \cite{gallot} we implicitly assume that the Riemannian manifolds we consider are $C^\infty$, connected, and countable at infinity manifolds in this whole document. Furthermore we assume the manifolds to be connected, without boundary and complete.

We will denote by $\langle ~,~\rangle$ the Riemannian product on a Riemannian manifold $M$,  $L(c)$ the length of any path with values in $M$, or any curve of $M$. $D$ is the covariant derivative associated to the Riemannian product. Finally we denote by $d_M$ the geodesic distance on $M$.

Following \cite{gallot} we recall some definitions and basic facts about geodesics.
\begin{itemize}
	\item We call a \emph{geodesic} any differentiable path with values in $M$ verifying the geodesic equation $D_{g'(t)}g'(t)=0$. We recall that such a curve is locally length minimising between its points (the reciprocal is true).  A geodesic is necessarily $C^\infty$ and parametrised with constant speed. When unspecified we implicitly consider the parametrisation by arc-length but we will use other when convenient.
	\item We call a \emph{minimal geodesic} any piecewise continuously differentiable path that is of minimal length (amongst every continuously differentiable paths)  between any pair of its points. Such a path is automatically a geodesic.
	\item Finally we call a \emph{loop} any continuous path $\gamma : [0,T] \rightarrow M$ such that $\gamma(0)=\gamma(T),$ and a \emph{minimal closed geodesic} any loop $\gamma$ such that for every points $P,Q$ on $\gamma$ there exists a minimal geodesic joining $P$ to $Q$ that is included in $\gamma$.
\end{itemize}

\begin{Rem} \label{Rem:submanifold} Given a Riemannian manifold $M$ and a submanifold $N$ of $M$, it is possible to consider the restriction $d_{M|N}$ of the geodesic distance $d_M$ to $N$. On the other hand, one can consider the Riemannian manifold $N$ endowed with the restriction $\langle ~,~ \rangle_{M|N}$ of the inner product of $M$ to $N$, which gives a geodesic distance $d_N$. In general those two distances are not equal, because the minimal geodesics in $M$ from points of $N$ take values in the whole of $M$. In particular it is not possible to deduce the value of the fractional index $\beta_M$ from local aspects of $M$ only, in spite of Remark \ref{Rem:subspace}.\end{Rem}
\begin{Rem} If $\gamma$ is a minimal closed geodesic in $M$, the two distances $d_{M|\gamma}$ and $d_{\gamma}$ are equal, and $\gamma$ is isometric to a circle of length $L(\gamma)$. In particular for a Riemannian manifold with a minimal closed geodesic, $\beta_M \leq \beta_{\mathbb{S}^1}=1$ (see Remarks \ref{Rem:subspace} and \ref{Rem:homo}) \end{Rem}

\paragraph{Free homotopy of loops} Theorem \ref{thmlacetminimal} and Theorem \ref{thmcompact} involve free homotopy of loops.
\begin{Def}[Free homotopy of loops] 
	We say that two loops $\gamma_1$, $\gamma_2$ are \emph{freely homotopic} if there exists reparametrisations $\tilde{\gamma}_1,\tilde{\gamma}_2:[0,1]\rightarrow M$ and a homotopy of loops from $\tilde{\gamma}_1$ to $\tilde{\gamma}_2$, that is to say  a continuous map
	\begin{alignat*}{1} f : [0,1] & \times [0,1] \rightarrow M \\
	( & s, t) \longmapsto f_t(s) \end{alignat*}
	
	such that $f_0=\tilde{\gamma}_1$ and $f_1=\tilde{\gamma}_2$ and for every $t\in[0,1], ~ f_t : [0,1] \rightarrow M$ is a loop.
	In this case we write $ \gamma_1 \sim \gamma_2.$
\end{Def}

We refer to \cite{Hatcher} for generalities about free homotopy of loops. We will use the fact that $\sim$ is an equivalence relation.

\section{Main result} \label{sec:main}
We give in this section the main result of the article and its proof. All our other results follow from Theorem \ref{thmmainresult}.
\subsection{Statement and proof}
We start by giving some definitions. The first two are directly connected to the property of negative definiteness for the power $d^{2H}$ of a distance.

\begin{Def}[Configurations] \label{Def:conf} Given a metric space $(E,d)$, we call a \emph{configuration} $((P_1,\cdots,P_n),(c_1,\cdots,P_n))$ any finite collection of distinct points $(P_1,\cdots,P_n)\in E^n$ with $(c_1,\cdots,c_n) \in \left(\mathbb{R}^*\right)^n$ such that $$\sum_{i=1}^n c_i = 0.$$
	\end{Def}
	\begin{Def}[Critical configurations]
	Given $H>0$, we say that a configuration is \emph{$H$-critical} if $$\sum_{i,j=1}^n c_i c_j d^{2H}(P_i,P_j)=0.$$
	\end{Def}
\begin{Rem} \label{remcritical} Let us observe that if there exists an $H$-fractional Brownian field $X^H$ indexed by $(E,d)$ we have
	
	$$\sum_{i,j=1}^n c_i c_j d^{2H}(P_i,P_j)=\Var \left( \sum_{i=1}^n c_i  X^H_{P_i} \right).$$
	
	 In this case the configuration $((P_1,\cdots,P_n),(c_1,\cdots,c_n ))$ is $H$-critical if and only if $$\sum_{i=1}^n c_i  X^H_{P_i}=0 \text{ \emph{almost surely.}}$$\end{Rem}
	
	\begin{Def}[Space of shortest directions] Given a Riemannian manifold $M$, $P\in M$ and $S \subset M$, we define the \emph{space of shortest directions from $P$ to $S$}
		
		$$ T_{P\rightarrow S}=\vectspan \left\{\begin{array}{cc} g'(0) ~|~ \exists Q \in S, ~ g:[0,1]\rightarrow M \\ \text{ minimal geodesic from  $P$ to $Q$}\end{array} \right\},$$
		where $\vectspan(V)$ denotes the linear span of a set of vectors $V$: the space of shortest directions $T_{P\rightarrow S}$ is a vector subspace of the tangent space $T_P(M)$.
	\end{Def}
Let us state the main result of the chapter:
\begin{Thm} \label{thmmainresult} Let $(M,d_M)$ be a complete  Riemannian manifold and $H$ in $]0,1[$. If there exists an $H$-fractional Brownian field indexed by $M$, then for every $H$-critical configuration $((P_1,\cdots,P_n),(c_1,\cdots,c_n))$,

	\begin{equation}  \label{G} \tag{G} \forall ~ i \in \{1,\cdots,n\}, ~ \dim T_{P_i\rightarrow \{P_j, j \neq i\} } = \dim M. \end{equation}
	\end{Thm}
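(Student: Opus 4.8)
My plan is to argue by contradiction, assuming that (G) fails at some index $i$, and to exploit the criticality of the configuration through the Gaussian structure of the field. If $\dim T_{P_i\rightarrow\{P_j,\,j\neq i\}}<\dim M$, then there is a unit vector $v\in T_{P_i}M$ orthogonal to $g'(0)$ for \emph{every} minimal geodesic $g$ from $P_i$ to any $P_j$, $j\neq i$. I set $x_s=\exp_{P_i}(sv)$ and perturb only the $i$-th point, replacing $P_i$ by $x_s$ and keeping the coefficients. Since $X^H$ exists and the configuration is $H$-critical, Remark~\ref{remcritical} gives $\sum_{l}c_lX^H_{P_l}=0$ almost surely; hence the perturbed combination $W(s):=c_iX^H_{x_s}+\sum_{j\neq i}c_jX^H_{P_j}$ equals $c_i(X^H_{x_s}-X^H_{P_i})$ almost surely. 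Taking variances, using \eqref{eq:defgen} and $d_M(x_s,P_i)=s$ for small $s$, I obtain $\Var(W(s))=c_i^2 s^{2H}$. On the other hand, by Remark~\ref{remcritical} applied to the perturbed configuration the bilinear form $\sum_{j,k}c_jc_k d_M^{2H}$ of that configuration is a fixed negative multiple of $\Var(W(s))$; subtracting the $H$-critical relation $\sum_{j,k}c_jc_kd_M^{2H}(P_j,P_k)=0$ of the original configuration, the whole thing is equivalent to the exact identity
$$\sum_{j\neq i}c_j\left(d_M^{2H}(x_s,P_j)-d_M^{2H}(P_i,P_j)\right)=-c_i\,s^{2H},\qquad s\in[0,\varepsilon).$$

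The heart of the matter is then a second-order estimate on the left-hand side. Write $\ell_j=d_M(P_i,P_j)>0$ and $r_j(s)=d_M(x_s,P_j)$. Because $v$ is orthogonal to every minimizing direction from $P_i$ to $P_j$, the first variation of arc length makes both one-sided derivatives of $r_j$ at $0$ vanish, so $r_j'(0)=0$; I claim moreover the two-sided quadratic bound $|r_j(s)-\ell_j|\le C s^2$. The upper bound $r_j(s)\le\ell_j+Cs^2$ follows from the second variation formula applied to an explicit variation of a fixed minimal geodesic from $P_i$ to $P_j$ whose transverse field equals $v$ at $P_i$ and vanishes at $P_j$, the corresponding index form being finite. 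The lower bound $r_j(s)\ge\ell_j-Cs^2$ I would obtain from the Hessian comparison theorem on a compact neighbourhood of the geodesic, where the sectional curvature is bounded, read in the barrier sense so as to stay valid across the cut locus. Since $\ell_j>0$ and $t\mapsto t^{2H}$ is smooth near $\ell_j$, this gives $d_M^{2H}(x_s,P_j)-\ell_j^{2H}=2H\ell_j^{2H-1}(r_j(s)-\ell_j)+O\!\big((r_j(s)-\ell_j)^2\big)=O(s^2)$, so the left-hand side of the identity is $O(s^2)$. But the right-hand side is $-c_i s^{2H}$ with $c_i\neq0$, and $s^2=o(s^{2H})$ precisely when $H<1$; hence $c_i=0$, a contradiction. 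This is exactly where the hypothesis $H\in\,]0,1[$ enters (at $H=1$ the two orders coincide and the argument breaks).

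The step I expect to be the main obstacle is the lower bound $r_j(s)\ge\ell_j-Cs^2$. Away from the cut locus of $P_j$ it is immediate from the smoothness of $d_M(\cdot,P_j)$ together with $r_j'(0)=0$; when $P_i$ is joined to $P_j$ by several minimizers but is not conjugate to it, $d_M(\cdot,P_j)$ is locally a minimum of finitely many smooth branches, each of which contributes its own quadratic lower bound, so the estimate survives the minimum. The genuinely delicate case is when $P_i$ is a conjugate point of $P_j$, where one cannot simply Taylor-expand the radial distance; there I would keep the second-order coefficient finite by comparing with a model space of constant curvature on a fixed compact neighbourhood (Rauch/Hessian comparison), using that an upper bound on the curvature pushes conjugate points away at a definite rate. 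Once $|r_j(s)-\ell_j|\le Cs^2$ is secured uniformly in $j$, the contradiction closes the proof, the orthogonality of $v$ to the shortest directions being used solely to remove the linear term and leave a genuine $O(s^2)$ remainder.
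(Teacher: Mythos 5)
Your reduction is sound and is, in substance, the paper's own argument in different clothing: where the paper adjoins a new point $P_{n+1}=g_\perp(\varepsilon)$ and splits the coefficient $c_n$ into $c_n/2+c_n/2$ so as to make the quadratic form $\sum c'_kc'_l\,d_M^{2H}$ positive, you keep the coefficients, move $P_i$ itself, and read the result off the variance of the perturbed combination (your ``fixed negative multiple'' is correct, the constant being $-2$). Either way, everything reduces to showing that $\sum_{j\neq i}c_j\bigl(d_M^{2H}(x_s,P_j)-d_M^{2H}(P_i,P_j)\bigr)=o(s^{2H})$ along \emph{some} sequence $s\to0$, and the hypothesis $H<1$ enters exactly as you say.

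The genuine gap is the lower half of your claimed estimate $|r_j(s)-\ell_j|\le Cs^2$. First, the tool you invoke points the wrong way: an upper curvature bound yields a \emph{lower} bound on the Hessian of $d_M(\cdot,P_j)$ only where this function is smooth, i.e.\ off the cut locus of $P_j$; what does survive ``in the barrier sense across the cut locus'' (Calabi's trick, under a \emph{lower} curvature bound) is the \emph{upper} Hessian bound, i.e.\ semiconcavity, which only reproves your upper estimate. At a cut point with two distinct arrival directions there is no smooth lower support function at all (its gradient would have to equal both arrival directions), so no barrier-sense lower Hessian bound can hold there. Second, and decisively, the estimate is simply false in the conjugate case that you yourself single out. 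At a generic conjugate cut point (an $A_3$, i.e.\ cusp, singularity of $\exp_{P_j}$ --- for instance the endpoint of the cut locus of a non-umbilic point on an ellipsoid) there is a unique minimal geodesic from $P_j$ to $P_i$, so $\dim T_{P_i\rightarrow\{P_j\}}=1$; the cut locus ends at $P_i$ tangentially to this geodesic, and the germ of $d_M(\cdot,P_j)$ at $P_i$ is, up to a smooth summand and smooth coordinates $\lambda=(\lambda_1,\lambda_2)$, the function $\min_u\bigl(u^4/4+\lambda_1u^2/2+\lambda_2u\bigr)$, where the eikonal constraint on each branch forces $\nabla\lambda_2(P_i)$ to be parallel to $v$. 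Minimising gives
\[
d_M(x_s,P_j)=\ell_j-\tfrac{3}{4}\,|\beta s|^{4/3}+o\bigl(|s|^{4/3}\bigr),\qquad \beta\neq0,
\]
so the distance drops like $|s|^{4/3}$, not $O(s^2)$, precisely in your direction $v$ orthogonal to the minimal geodesic. Consequently your contradiction disappears whenever $2H\ge 4/3$, and even for $2H<4/3$ you would still need to \emph{prove} a lower bound of some order strictly larger than $2H$, which none of the proposed tools provide; a curvature bound ``pushing conjugate points away'' is beside the point, since the obstruction is not where conjugate points lie but the non-quadratic behaviour of the distance at one that is already present. This is exactly the difficulty that the paper's Lemma~\ref{lemtek2} is designed to bypass: it never claims a pointwise two-sided bound for all $s$, but produces one particular sequence $\varepsilon_m\to0$ along which a first-variation expansion with quadratic remainder holds, by smoothly interpolating the initial velocities of minimal geodesics from $P_j$ to $x_{\varepsilon_m}$ and Taylor-expanding the length of the resulting geodesic variation. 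To complete your proof you would need that lemma or a substitute that genuinely confronts conjugate cut points --- and note that a sequence is all you need, since your identity holds for every small $s$; but no choice of sequence helps against an exact $|s|^{4/3}$ asymptotic, which is also why the smooth-interpolation step of Lemma~\ref{lemtek2} (which implicitly forces $d_M(P_j,x_{\varepsilon_m})-\ell_j=O(\varepsilon_m^2)$) deserves particular scrutiny in this same situation.
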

		 To prove Theorem \ref{thmmainresult} we will assume the existence of an $H$-critical configuration such that $\eqref{G}$ does not hold and show that there exists no $H$-fractional Brownian field indexed by $M$. 		

		  \begin{proof}Let us consider $H$ in $]0,1[$ and assume the existence of an $H$-critical configuration, that is to say distinct points $P_1,\cdots,P_n \in M$ and $c_1,\cdots,c_n \in \mathbb{R}^*$ such that $$\sum_{i=1}^n c_i=0$$ and $$\sum_{i,j=1}^n c_i c_j d_M^{2H}(P_i,P_j)=0.$$ Furthermore we suppose that the points $P_1,\cdots, P_n$ do not verify the geometrical condition \eqref{G}, therefore
		  $$ \exists i \in \{1, \cdots, n\},  ~ \dim T_{P_i\rightarrow \{P_j, j \neq i\} } < \dim M.$$
		 Without loss of generality we assume $i=n$ and consider a geodesic $g_\perp$ parametrised by arc-length with $g_\perp(0)=P_n$ and $g_\perp'(0) \in \left(T_{P_n\rightarrow \{P_j, j \neq n\} }\right)^\perp$.
Using Lemma \ref{lemtek2} we obtain a sequence of positive $\varepsilon_m$ converging towards zero and geodesics $(g_i)_{1\leq i \leq n-1}$ such that for every $i$ in $\{1,\cdots,n-1\},$
			$$ d_M(P_i, g_\perp(\varepsilon_m))= d_M (P_i, P_n) + \left\langle g_\perp'(0), g_i'(d_M(P_i,P_n))\right\rangle_M \varepsilon_m + O \left( \varepsilon_m^2\right).$$
			
			Since $g_\perp'(0) \in \left(T_{P_n\rightarrow \{P_j, j \neq n\} }\right)^\perp$  we get for every $1\leq i \leq n -1$
			 $$\left\langle g_\perp'(0), g_i'(d_M(P_i,P_n))\right\rangle_M=0,$$
			Hence \begin{equation} \label{eq:DL} d_M(P_i, g_\perp(\varepsilon_m))= d_M (P_i, P_n) + O \left( \varepsilon_m^2\right), \end{equation}
from which
\begin{alignat*}{1}
d_M^{2H}(P_i,g_\perp(\varepsilon_m)) &= \left(d_M(P_i,P_n)+ O\left( \varepsilon_m^2\right)\right)^{2H} \\
&= d_M^{2H}(P_i,P_n)\left( 1+\frac{O\left( \varepsilon_m^2\right)}{d_M(P_i,P_n)} \right)^{2H} \\
& =  d_M^{2H}(P_i,P_n)\left( 1+2H\frac{O\left( \varepsilon_m^2\right)}{d_M(P_i,P_n)} + O \left( \varepsilon_m^4\right)\right).
\end{alignat*}
Using $H<1$ we obtain
\begin{equation} \label{DL1} d_M^{2H}(P_i,g_\perp(\varepsilon_m)) = d_M^{2H}(P_i,P_n) + o\left(\varepsilon_m^{2H}\right). \end{equation}

Let us also notice that for $m$ large enough, $\varepsilon_m$ is small enough so that the geodesic $g_\perp$ is minimal between $g_\perp(\varepsilon_m)$ and $P_n$ (see \cite{gallot}), hence
\begin{equation} \label{DL2} d_M^{2H}(P_n,g_\perp(\varepsilon_m))=\varepsilon_m^{2H}.\end{equation}
 
 We now set \begin{equation} \label{eq:Pn+1}P_{n+1}:=g_\perp(\varepsilon_m)\end{equation} and consider the configuration $((P_1,\cdots,P_n,P_{n+1}),(c'_1,\cdots, c'_{n+1}))$, with \\ $(c'_1,\cdots,c'_{n-1},c'_n,c'_{n+1})=(c_1,\cdots,c_{n-1},c_n/2,c_n/2)$ so that in particular $$ \sum_{i=1}^{n+1} c'_i = \sum_{i=1}^{n} c_i= 0.$$  Let us compute

 \begin{multline*} \shoveleft{\sum_{i,j=1}^{n+1}c'_i c'_j d_M^{2H}(P_i,P_j)} \\
 \shoveleft{=  \sum_{i,j=1}^{n-1}c_i c_j d_M^{2H}(P_i,P_j) + 2 \sum_{i=1}^{n-1} c_i \frac{c_n}{2} d_M^{2H}(P_i,P_n)+2 \sum_{i=1}^{n-1} c_i \frac{c_n}{2} d_M^{2H}(P_i,P_{n+1})}\\
 \shoveright{+2\left(\frac{c_n}{2}\right)^2 d_M^{2H}(P_n,P_{n+1}).} \\
~
 \end{multline*}
 Recalling \eqref{eq:Pn+1} we use \eqref{DL1} and \eqref{DL2} to obtain
 \begin{alignat*}{1}
 & \sum_{i,j=1}^{n-1}c_i c_j d_M^{2H}(P_i,P_j) + 2 \sum_{i=1}^{n-1} c_i \frac{c_n}{2} d_M^{2H}(P_i,P_n) +2 \sum_{i=1}^{n-1} c_i \frac{c_n}{2} \left( d_M^{2H}(P_i,P_n) + o\left(\varepsilon_m^{2H}\right) \right)\\ & +2 \left(\frac{c_n}{2}\right)^2 \varepsilon_m^{2H} \\
 &= \sum_{i,j=1}^{n}c_i c_j d_M^{2H}(P_i,P_j) + c_n \sum_{i=1}^{n-1} c_i ~ o\left(\varepsilon_m^{2H}\right) +\frac{c_n^2}{2} \varepsilon_m^{2H}.\\
 \end{alignat*}

 By hypothesis $$\sum_{i,j=1}^{n}c_i c_j d_M^{2H}(P_i,P_j)=0,$$
 hence it is clear that
 $$\sum_{i,j=1}^{n+1}c'_i c'_j d_M^{2H}(P_i,P_j) = \frac{c_n^2}{2}  ~\varepsilon_m^{2H} + o\left(\varepsilon_m^{2H}\right)$$
 is positive for $m$ large enough. We conclude that $d^{2H}$ is not of negative type and therefore there exists no $H$-fractional Brownian field indexed by $(M,d_M)$. \end{proof}
 			\begin{Rem} To prove Theorem \ref{thmmainresult} we have exhibited a configuration \\ $((P_1,\cdots,P_{n+1}),(c'_1,\cdots,c'_{n+1}))$ such that $$\sum_{i=1}^{n+1} c'_i c'_j d^{2H}(P_i,P_j) > 0. $$ To obtain it from the critical configuration $((P_1,\cdots,P_{n}),(c_1,\cdots,c_n))$ we only added $P_{n+1}$ as close as wanted to $P_n$ and the coefficients $c_i$ remained the same, except for $P_n$ which ``loses half of its coefficient to $P_{n+1}$", that is to say $$c'_n=c'_{n+1}=c_n/2.$$ We can look at this new configuration as an infinitesimal perturbation of the critical configuration. Condition $\eqref{G}$ from Theorem \ref{thmmainresult} is necessary to avoid that those perturbations of a critical configuration prevent the fractional Brownian motion to exist. Let us observe that while the perturbation happens in a neighbourhood of $(P_1,\cdots,P_n)$ it is impossible to decide whether $P_1,\cdots,P_n \in M$ verify $\eqref{G}$ without considering the whole manifold $(M,d)$, because condition $\eqref{G}$ deals with minimal geodesics (see also Remark \ref{Rem:submanifold}).
 			\end{Rem}

\section{Manifolds with a minimal closed geodesic} \label{sec:closedminimalgeodesic}
In this section we apply our main result to a Riemannian manifold with at least a minimal closed geodesic. In this context we obtain Theorem \ref{thmcercle}, which is a stronger version of Theorem \ref{thmmainresult}, that we apply to give several examples and corollaries.

In particular Theorem \ref{thmcompact} states that a compact manifold admitting a Lévy Brownian field is necessary simply connected.
\subsection{A stronger version of Theorem \ref{thmmainresult}}
Let $(M,d_M)$ be a Riemannian manifold with a minimal closed geodesic $\gamma$.

\begin{Def}[Antipodal point]
For any $P$ on $\gamma$, we call the antipodal point of $P$ on $\gamma$, denoted by $P^*$, the unique point of $\gamma$ such that $$d_M(P,P^*)=L(\gamma)/2.$$
\end{Def}
\paragraph{Many $1/2$-critical configurations on $\gamma$}
Consider now distinct points $P_1,\cdots,P_4\in \gamma$ such that 
\begin{equation} \label{antipod1} P_3=P_1^*\end{equation} and \begin{equation} \label{antipod2} P_4=P_2^*. \end{equation}

\begin{figure}[h!]
	\centering \def\svgwidth{100pt}
	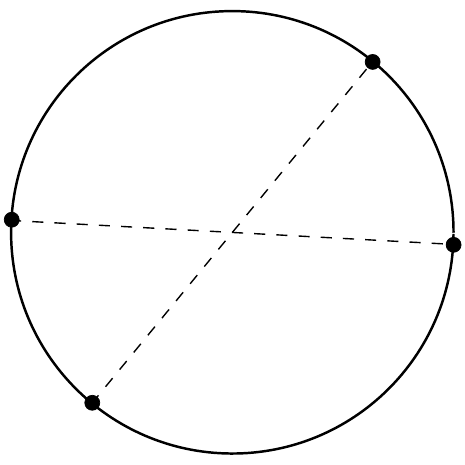 \caption{An $1/2$-critical configuration on the circle}
	\label{fig:conf}
\end{figure}

Because $\gamma$ is a minimal closed geodesic we know that $d_M$ restricted to $\gamma$ is the distance on $\mathbb{S}^1$ up to a multiplication by $L(\gamma)/2\pi$. Setting \begin{equation}\label{eq:c1...c4}c_1=c_3=1, \text{ and } c_2=c_4=-1,\end{equation} it is easy to check that \begin{equation} \label{eq:som4zero} \sum_{i,j=1}^4 c_i c_j d_M(P_i,P_j)=0.\end{equation}
Observe that $c_1+c_2+c_3+c_4=0$, so that $((P_1,P_2,P_3,P_4),(c_1,c_2,c_3,c_3))$ is a $1/2$-critical configuration.

We already know that if there exists a Lévy Brownian field (\textit{i.e.} a $1/2$-fractional Brownian field) indexed by $M$, every distinct points $P_1,\cdots,P_4\in \gamma$ verifying \eqref{antipod1} and \eqref{antipod2} must verify condition $\eqref{G}$ from Theorem \ref{thmmainresult}.

Furthermore it is possible to consider $P_1,P_2,P_3,P_4$ as required with \linebreak $d_M(P_1,P_2)$ as small as wanted. Because $P_1,P_3$ and $P_2,P_4$ are respectively antipodal, $d_M(P_2,P_4)=d_M(P_1,P_3)$ is also as small as wanted. This allows us to give the following result.
\begin{Thm}\label{thmcercle}
	Let $(M,d_M)$ be a complete Riemannian manifold such that there exists a Lévy Brownian field indexed by $(M,d_M)$. Then for every minimal closed geodesic $\gamma$ and every $P\in \gamma$, 
	$$\dim T_{P\rightarrow \{P^*\} } = \dim M.$$

\end{Thm}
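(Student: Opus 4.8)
The plan is to argue by contradiction, in the spirit of the proof of Theorem~\ref{thmmainresult}. Suppose a Lévy Brownian field exists on $M$, yet for some minimal closed geodesic $\gamma$ and some $P\in\gamma$ one has $\dim T_{P\rightarrow\{P^*\}}<\dim M$. Write $P_1=P$, $P_3=P^*$ and $V:=T_{P_1\rightarrow\{P_3\}}$. I would then pick $P_2\in\gamma$ distinct from $P_1$ and lying within the injectivity radius of $P_1$, and set $P_4=P_2^*$; as recalled just before the statement, $((P_1,P_2,P_3,P_4),(1,-1,1,-1))$ is a $1/2$-critical configuration. Since the field exists and the configuration is $1/2$-critical, Theorem~\ref{thmmainresult} forces condition \eqref{G}, namely $\dim T_{P_1\rightarrow\{P_2,P_3,P_4\}}=\dim M$.

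First I would exploit two inclusions coming from $\gamma$ being a minimal closed geodesic. The two arcs of $\gamma$ joining $P_1$ to its antipode $P_3$ are minimal geodesics of $M$, so $\gamma'(0)\in V$; and for $P_2$ inside the injectivity radius the unique minimal geodesic from $P_1$ to $P_2$ is the short arc of $\gamma$, whence $T_{P_1\rightarrow\{P_2\}}=\mathbb{R}\,\gamma'(0)\subseteq V$. Consequently $T_{P_1\rightarrow\{P_2,P_3,P_4\}}=V+T_{P_1\rightarrow\{P_4\}}$. Combining this with \eqref{G} and the assumption $\dim V<\dim M$, I obtain $T_{P_1\rightarrow\{P_4\}}\not\subseteq V$: there is a minimal geodesic $g$ from $P_1$ to $P_4$ with $g'(0)\notin V$.

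Next I would rerun the perturbation of Theorem~\ref{thmmainresult}. Let $v\in V^{\perp}$ be the normalised orthogonal projection of $g'(0)$ onto $V^{\perp}$, so that $v\perp V$ and $\langle v,g'(0)\rangle>0$, and let $g_\perp$ be the unit-speed geodesic with $g_\perp(0)=P_1$, $g_\perp'(0)=v$. Replacing $P_1$ by $g_\perp(\varepsilon)$ while keeping $P_2,P_3,P_4$ and the coefficients $(1,-1,1,-1)$ fixed, I would expand $\sum c_ic_j d_M(\cdot,\cdot)$ using the same first-variation estimates as in the proof of Theorem~\ref{thmmainresult}. Since $v$ is orthogonal to $V\supseteq T_{P_1\rightarrow\{P_2\}}$ and to $V=T_{P_1\rightarrow\{P_3\}}$, the distances to $P_2$ and $P_3$ vary only by $o(\varepsilon)$, whereas $\langle v,g'(0)\rangle>0$ makes $d_M(g_\perp(\varepsilon),P_4)$ strictly decrease at first order. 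As the unperturbed form vanishes by criticality, the perturbed form equals $2\langle v,g'(0)\rangle\,\varepsilon+o(\varepsilon)>0$ for small $\varepsilon>0$, contradicting the negative definiteness of $d_M$, equivalently the existence of the Lévy field. This forces $\dim T_{P\rightarrow\{P^*\}}=\dim M$.

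The main obstacle is that Theorem~\ref{thmmainresult} only controls the joint span $T_{P_1\rightarrow\{P_2,P_3,P_4\}}$, not $T_{P_1\rightarrow\{P^*\}}$ on its own. A tempting shortcut is to let $P_2\to P_1$ (hence $P_4\to P_3$) and deduce $\dim T_{P_1\rightarrow\{P_3\}}=\dim M$ from $\dim T_{P_1\rightarrow\{P_2,P_3,P_4\}}=\dim M$ in the limit; but $Q\mapsto T_{P_1\rightarrow\{Q\}}$ is not continuous, and $T_{P_1\rightarrow\{P_4\}}$ need not be contained in $V$ however close $P_4$ is to $P_3$. The two ingredients above are tailored precisely to this difficulty: Theorem~\ref{thmmainresult} guarantees that an escaping direction $g'(0)\notin V$ exists as soon as $\dim V<\dim M$, and the perturbation converts one such escaping direction into a genuinely positive quadratic form. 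A secondary technical care is the one-sided nature of the first-variation formula for $d_M(\cdot,Q)$ when several minimal geodesics join $P_1$ to $Q$: one must check that the perpendicular directions contribute only $o(\varepsilon)$ rather than a negative first-order term, which holds exactly because $v\perp T_{P_1\rightarrow\{Q\}}$ for $Q\in\{P_2,P_3\}$.
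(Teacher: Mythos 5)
Your proof is correct, but it follows a genuinely different route from the paper's. The paper never invokes Theorem~\ref{thmmainresult} as a statement in its proof of Theorem~\ref{thmcercle}; instead it re-runs the whole perturbation argument on a one-parameter \emph{family} of antipodal configurations ($P_4=P$, $P_2=P^*$, $P_3(\eta)$ at distance $\eta$ from $P$, $P_1(\eta)=P_3(\eta)^*$), keeps the coefficient-splitting trick ($c_4\mapsto c_4/2,c_4/2$ with the new point $P_5=g_\perp(\varepsilon_m)$), and then has to deal with the fact that the minimal geodesics from $P_1(\eta)$ and $P_3(\eta)$ to $P$ need \emph{not} have speeds in $T_{P\to\{P^*\}}$: it shows via Lemma~\ref{lemtek1} (compactness of minimal geodesics) that the offending inner products tend to $0$ as $\eta_n\to 0$, so that the clean positive term $\varepsilon_m/2$ coming from the split coefficient eventually dominates. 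You instead fix a \emph{single} configuration with $P_2$ inside the injectivity radius, use Theorem~\ref{thmmainresult} as a black box to get \eqref{G}, note that $T_{P_1\to\{P_2\}}$ and $T_{P_1\to\{P_3\}}$ both lie in $V$, so the span toward $P_4$ must escape $V$, and let the first-order gain come from the $P_4$-term of an unsplit perturbation. This buys economy: no $\eta$-limit, no Lemma~\ref{lemtek1}, no uniqueness-of-short-geodesics argument. The price is the sign bookkeeping you correctly flag: since $c_1c_4=-1$, an \emph{upper} bound on $d_M(g_\perp(\varepsilon),P_4)$ suffices, and the first variation along the reversed escaping geodesic $g$ gives $d_M(g_\perp(\varepsilon),P_4)\leq d_M(P_1,P_4)-\langle v,g'(0)\rangle\varepsilon+O(\varepsilon^2)$; whereas for $P_3$ (coefficient $+1$) you need the two-sided estimate $d_M(g_\perp(\varepsilon_m),P_3)=d_M(P_1,P_3)+O(\varepsilon_m^2)$, which is exactly what Lemma~\ref{lemtek2} delivers because every minimal geodesic from $P_3$ to $P_1$ arrives with speed in $V\perp v$ (your phrase ``the perturbed form equals $2\langle v,g'(0)\rangle\varepsilon+o(\varepsilon)$'' should strictly be ``is at least'', since the true first-order decrease toward $P_4$ could be larger, but this only helps). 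So both proofs rest on the same first-variation machinery; yours replaces the paper's limiting argument in $\eta$ by a direct appeal to the already-proved Theorem~\ref{thmmainresult}.
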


\begin{proof}
	Let us assume there exists $P\in \gamma$ such that $\dim T_{P\rightarrow \{P^*\} } < \dim M$. We take $P_4=P$, $P_2=P_4^*$, and choose for every $\eta \in ]0,\pi[$ a point $P_3(\eta) \in \gamma$ such that $d_M(P_4,P_3(\eta))=\eta >0$. Finally we define $P_1(\eta)=(P_3(\eta))^*$ (see Figure \ref{figcercleeta} above).  Please notice that we will sometimes write $P_i$ with $i$ taking values in $\{1,2,3,4 \}$: what we mean is $P_i(\eta)$ if $i\in\{ 1,3 \}$, and $P_i$ if $i \in \{2,4\}$.
	
	\begin{figure}[h!]
		\centering \def\svgwidth{134pt}
		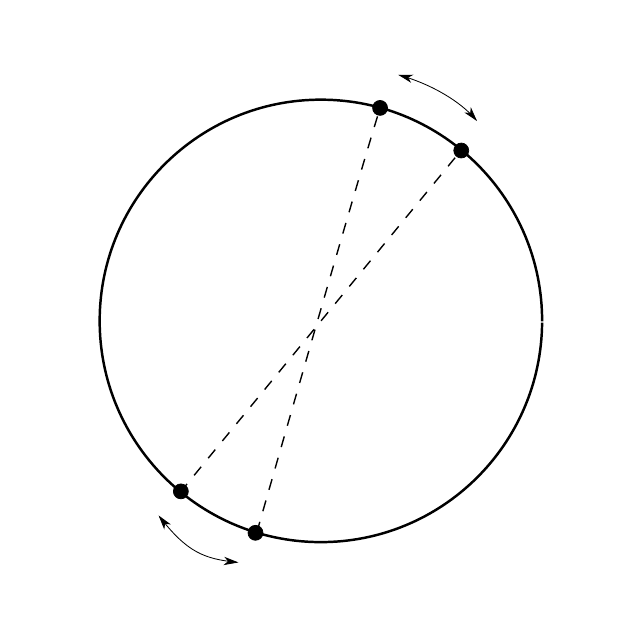 \caption{Disposition of the four points on $\gamma$}
		\label{figcercleeta}
	\end{figure}
	
	Let $g_\perp$ be a geodesic parametrised by arc-length with $g_\perp(0)=P_4$ and $g_\perp ' (0)\in \left( T_{P\rightarrow \{P^*\}} \right)^\perp$ .
	
	Using Lemma \ref{lemtek2} we get a sequence of positive $\varepsilon_m$ with $\lim \varepsilon_m = 0$ such that for each $i\in \{1,2,3\}$ there exists
	$$g_i^{\eta}:[0,d_M(P_i,P_4)] \rightarrow M$$
	 a minimal geodesic with $g_i^{\eta}(0)=P_i$, $g_i^{\eta}(d_M(P_i,P_4))=g_\perp(0)=P_4$ and such that
	\begin{equation} d_M(P_i, g_\perp(\varepsilon_m))=d_M(P_i,P_4)+ \left\langle g_\perp ' (0), \left( g_i^\eta \right)'(d_M(P_i,P_4)) \right\rangle_M \varepsilon_m+O_\eta\left( 
	\varepsilon_m ^2\right). \end{equation}
	
 Let us remark that the above expression is not uniform in $m$. Indeed the sequence $(\varepsilon_m)$ depends on $\eta$, as much as the sequence $m \mapsto O_\eta \left(\varepsilon_m \right)$. This is not a problem as we fix $\eta$ before we pass to the limit in $m$ at the end of the proof.
	
We now distinguish three cases:
	
	\begin{itemize}
		\item i=1: Let us consider the following reparametrisations of the $g^\eta_1$:
			\begin{alignat*}{1}
				 \tilde{g}^\eta_1  & : [0,1] \rightarrow M \\
				& ~~~~~ t \mapsto g_1^\eta \big(t d_M\big(P_1(\eta),P_4\big)\big).
			\end{alignat*}
			Applying Lemma \ref{lemtek1} we get a sequence of $\eta_n>0$ converging towards zero such that $\tilde{g}_1^{\eta_n}$ converges to $\tilde{g}_1$ minimal geodesic between $\lim\limits_{n\rightarrow +\infty} P_1(\eta_n)= P_2$ and $P_4$ when $n$ goes to infinity. We also have \linebreak $\lim\limits_{n\rightarrow \infty}\left(\tilde{g}_1^{\eta_n}\right)'(1)=\tilde{g}_1'(1)$ hence
			\begin{alignat*}{1}
			\left(g_1^{\eta_n}\right)'\big(d_M\big(P_1(\eta_n),P_4\big)\big)=\frac{\left(\tilde{g}_1^{\eta_n}\right)'(1)}{d_M\big(P_1(\eta_n),P_4\big)} \underset{n\rightarrow +\infty}{\rightarrow} \frac{\tilde{g}_1'(1)}{d_M(P_2,P_4)}.
			\end{alignat*}
			We obtain $$\lim\limits_{n\rightarrow \infty}\Big\langle g_\perp'(0), \left(g^{\eta_n}_1\right)'  \left( d_M\big(P_1(\eta_n),P_4\big) \right) \Big\rangle_M=\frac{\left\langle g_\perp'(0), \tilde{g}_1'  (1) \right\rangle_M}{d_M(P_2,P_4)}=0,$$ 
			because $\tilde{g}_1$ is a minimal geodesic from $P_2$ to $P_4$ hence $\tilde{g}_1'  (1)\in T_{P\rightarrow \{P^*\}} $.
		
		\item i=2: $\left\langle g_\perp'(0), \left(g^{\eta}_2\right)'  (d_M(P_2,P_4)) \right\rangle_M=0$ because $$\left( g^{\eta}_2\right)'  (d_M(P_2,P_4))  \in T_{P\rightarrow \{P^*\}}.$$
		\item i=3: We again consider reparametrisations $\tilde{g}_3^{\eta_n} : [0,1] \rightarrow M$ of the $g_3^{\eta_n}$ and apply Lemma \ref{lemtek1} to obtain the convergence of $\tilde{g}_3^{\eta_n}$ towards a minimal geodesic $\tilde{g}_3$ between $P_3(\eta_n)$ and $P_4$. For $n$ large enough, $d_M(P_3(\eta_n),P_4)=\eta_n$ is small enough so that there exists a unique minimal geodesic between $P_3(\eta_n)$ and $P_4$ (up to reparametrisations). This proves that $\tilde{g}_3$ is included in $\gamma$ when $n$ is large enough hence $\tilde{g}_3'(1) \in T_{P\rightarrow \{P^*\}}$. Proceeding with the same computations we did for $i=1$ we obtain
		$$ \lim\limits_{n\rightarrow \infty}\left\langle g_\perp'(0), \left(g^{\eta_n}_3\right)' \left(d_M\big(P_3(\eta_n),P_4\big)\right) \right\rangle_M=0. $$
	\end{itemize}
	In the end for every $i$ in $\{1,2,3 \}$ we have \begin{equation} \label{limn} \displaystyle \lim\limits_{n\rightarrow \infty} \left\langle g_\perp ' (0), \left( g_i^{\eta_n} \right)'(d_M(P_i,P_4)) \right\rangle_M  =0. \end{equation}
	
	We now follow exactly the proof of Theorem \ref{thmmainresult}. Setting $P_5=g_\perp(\varepsilon_m)$ and $(c'_1,\cdots,c'_5)=(c_1,c_2,c_3,c_4/2,c_4/2)$. Recall from \eqref{eq:c1...c4} and \eqref{eq:som4zero} that $(c_1,\cdots,c_4)=(1,-1,1,-1)$ so that $\displaystyle \sum_{i,j=1}^4 c_ic_j d(P_i,P_j)=0$.  We obtain
	\begin{alignat*}{1}
		& \sum_{i,j=1}^5 c'_i c'_j d_M(P_i,P_j) \\
		= & \sum_{i,j=1}^{3}c_i c_j d_M(P_i,P_j) + 2 \sum_{i=1}^{3} c_i \frac{c_4}{2} d_M(P_i,P_4)  \\
		& +2 \sum_{i=1}^{3} c_i \frac{c_4}{2} \Big{[}d_M(P_i,P_4)+\left\langle g_\perp ' (0), \left( g_i^{\eta_n} \right)'(d_M(P_i,P_4)) \right\rangle_M  \varepsilon_m+O_{\eta_n}\left( \varepsilon_m ^2\right) \Big{]}\\
		&+2 \left(\frac{c_4}{2}\right)^2 \varepsilon_m \\
		= & \underbrace{\sum_{i,j=1}^{4}c_i c_j d_M(P_i,P_j) }_{=0}+ c_4\sum_{i=1}^3 c_i \left( \left\langle g_\perp ' (0), \left( g_i^{\eta_n} \right)'(d_M(P_i,P_4)) \right\rangle_M \varepsilon_m +O_{\eta_n}\left( \varepsilon_m ^2\right) \right)\\
		&+ 2 \left(\frac{c_4}{2}\right)^2 \varepsilon_m \\
		= & \varepsilon_m \left( \frac{1}{2} - \sum_{i=1}^3 c_i \left\langle g_\perp ' (0), \left( g_i^{\eta_n} \right)'(d_M(P_i,P_4)) \right\rangle_M \right) + O_{\eta_n}\left( \varepsilon_m ^2\right).
	\end{alignat*}
	Using \eqref{limn} if we fix $n$ large enough we have $$ \frac{1}{2} - \sum_{i=1}^3 c_i \left\langle g_\perp ' (0), \left( g_i^{\eta_n} \right)'(d_M(P_i,P_4)) \right\rangle_M>0.$$ In this case for $m$ large enough $$ \sum_{i,j=1}^5 c'_i c'_j d_M(P_i,P_j) >0.$$ We conclude that $d_M$ is not of negative type and therefore there exists no Lévy Brownian field indexed by $(M,d_M)$.
\end{proof}

\begin{Rem} In \cite{morozova68} Chentsov and Morozova give a different necessary condition for the existence of a Lévy Brownian field indexed by a manifold with a minimal closed geodesic. 	The proof of their result is based on the remark that for any Lévy Brownian field $X_P$ indexed by $\mathbb{S}^1$, $$ \forall P \in \mathbb{S}^1,~  X_P+X_{P^*} = 2 \int_{\mathbb{S}^1} X_Q dQ, $$ where $dQ$ denotes the uniform measure on $\mathbb{S}^1$. In particular this random variable does not depend on $P$. This directly implies that $$ \forall P,P' \in \mathbb{S}^1, ~X_P+X_{P^*}-X_{P'}-X_{P'*}=0 ~\text{almost surely.}$$ Equivalently $((P,P^*,Q,Q^*),(1,-1,1,-1))$ is a $1/2$-critical configuration (see Remark \ref{remcritical}), which is our starting point to prove Theorem \ref{thmcercle}. However their statement seems distinct from ours. Furthermore using Theorem \ref{thmcercle} we show there exists no Lévy Brownian field in cases where it seems uneasy to check whether Morozova and Chentsov's condition is verified or not (See Examples \ref{Ex:outsidesurface} and \ref{Ex:rotation_invariant}, Theorem \ref{thmlacetminimal} and \ref{thmcompact}).
\end{Rem}

\subsection{Examples} \label{sec:ex}

\begin{Exe}[Surface outside a sphere tangent to a great circle] \label{Ex:outsidesurface} Let us denote by $B(0,1) \subset \mathbb{R}^3 $ the closed ball of center $0$ and radius $1$, and $C$ a great circle of the unit sphere $\mathbb{S}^2$. Let $S \subset \mathbb{R}^3$ be a surface such that $S \cap B(0,1)=C$. Let us show that there exists no Lévy Brownian field indexed by $S$.
	
	\begin{figure}[h!]
		\centering \def\svgwidth{150pt}
		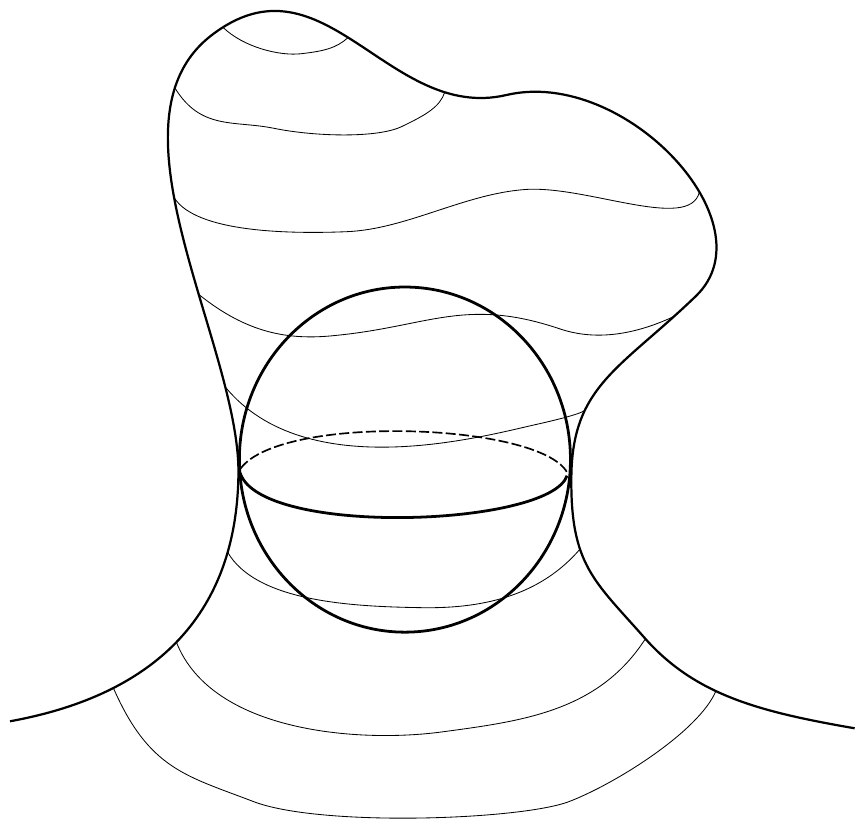 \caption{A surface $S$ verifying our assumptions }
		\label{fig:surface_ext}
	\end{figure}
	
We consider \begin{alignat*}{1}  \Pi : \mathbb{R}^3 \rightarrow \mathbb{R}^3\\
																X \mapsto \frac{X}{\| X\|}. \end{alignat*}

\begin{Lem} \label{LemPi} For every curve $c$ with values in $S\subset \mathbb{R}^3$,
	$$ L(c) \geq L\left( \Pi \circ c \right),$$
	with equality if and only if $c$ takes values in $C$.
\end{Lem}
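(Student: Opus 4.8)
The plan is to reduce the whole statement to a pointwise estimate on the differential of the radial projection $\Pi$ and then integrate. First I would extract the single geometric consequence of the hypothesis $S\cap B(0,1)=C$: since $C\subset\mathbb{S}^2$, every $X\in S$ satisfies $\|X\|\ge 1$, with equality exactly when $X\in C$ (indeed any point of $S$ with $\|X\|\le 1$ lies in $S\cap B(0,1)=C$, and conversely $C\subset\mathbb{S}^2$). In particular $0\notin S$, so $\Pi:X\mapsto X/\|X\|$ is smooth on a neighbourhood of $S$ and $\Pi\circ c$ is again piecewise $C^1$, so that $L(\Pi\circ c)=\int\|(\Pi\circ c)'\|$ is well defined.

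Next I would compute the differential. For $X\ne 0$,
\[ d\Pi_X(V)=\frac{1}{\|X\|}\left(V-\frac{\langle X,V\rangle}{\|X\|^2}X\right), \]
so $d\Pi_X$ is $1/\|X\|$ times the orthogonal projection onto $X^\perp$. Hence
\[ \|d\Pi_X(V)\|^2=\frac{1}{\|X\|^2}\left(\|V\|^2-\frac{\langle X,V\rangle^2}{\|X\|^2}\right)\le\frac{\|V\|^2}{\|X\|^2}\le\|V\|^2, \]
the last step using $\|X\|\ge 1$ on $S$. Applying this with $X=c(t)$ and $V=c'(t)$ and integrating over the finitely many intervals of differentiability yields $L(\Pi\circ c)\le L(c)$ at once.

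For the equality case, one implication is immediate: if $c$ takes values in $C\subset\mathbb{S}^2$ then $\Pi\circ c=c$, so the two lengths coincide. For the converse I would use that the integrand $\|c'\|-\|d\Pi_{c}(c')\|$ is continuous and nonnegative on each $C^1$ piece, so $L(c)=L(\Pi\circ c)$ forces it to vanish identically. By the two inequalities above, at any $t$ with $c'(t)\ne 0$ this equality requires both $\langle c(t),c'(t)\rangle=0$ and $\|c(t)\|=1$, the latter giving $c(t)\in S\cap\mathbb{S}^2=C$. It then remains to rule out that $c$ leaves $C$ only along stretches where $c'=0$: on the open set $\{t:\|c(t)\|>1\}$ the pointwise equality forces $c'\equiv 0$, hence $c$ is constant on each connected component; at an interior endpoint $\alpha$ of such a component one has $\|c(\alpha)\|=1$, while continuity gives $c(\alpha)=\lim_{t\to\alpha^+}c(t)$, a point of norm $>1$, a contradiction. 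This empties the set for any non-constant curve, so $c$ takes values in $C$.

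The hard part is exactly this last equality analysis: the pointwise estimate only constrains $c$ where it actually moves, so the genuine work is the continuity argument showing a curve cannot slip off $C$ through a constant arc. The only true exception is a curve constant at a point off $C$, which has zero length on both sides and is harmless; restricting attention to non-degenerate curves removes it.
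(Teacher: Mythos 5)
Your proof is correct and follows essentially the same route as the paper's: compute $d\Pi_X$ as $\frac{1}{\|X\|}$ times the orthogonal projection onto $X^\perp$, deduce the pointwise bound $\|d\Pi_X(V)\|\leq\|V\|$ from $\|X\|\geq 1$ on $S$, and integrate. In fact you are more complete than the paper on the equality case: the paper only records the pointwise equality condition ($\|X\|=1$ and $V$ tangent to $\mathbb{S}^2$) and leaves implicit the passage from equality of the integrals to the conclusion that $c$ lies in $C$, whereas you carry this out, and you correctly isolate the one genuine exception --- a curve constant at a point of $S\setminus C$ --- showing that the ``only if'' direction as stated needs a nondegeneracy caveat (harmless in the paper's application, where $c$ is a minimal geodesic between distinct points of $C$).
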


\begin{proof} For every $X=(x,y,z) \in \mathbb{R}^3$ we have $$\Pi (X)=\frac{(x,y,z)}{(x^2+y^2+z^2)^{1/2}}.$$
	The map $\Pi$ is differentiable on $\mathbb{R}^3 \setminus \{0 \}$. For every $X \in \mathbb{R}^3 \setminus \{0 \}$ we compute the Jacobian matrix
	$$ D \Pi _X= \frac{1}{\|X\|}\left(\begin{array}{ccc}
	1-\frac{x^2}{\|X\|^2} & \frac{xy}{\|X\|^2}& \frac{xz}{\|X\|^2}\\
	\frac{yx}{\|X\|^2} & 1-\frac{y^2}{\|X\|^2} & \frac{yz}{\|X\|^2} \\
	\frac{zx}{\|X\|^2} & \frac{zy}{\|X\|^2}& 1- \frac{z^2}{\|X\|^2}
	\end{array}\right).$$
	We have $$ D \Pi _X= \frac{1}{\|X\|} \left( I_3 -n^{\mathsf{T}}n \right),$$
	with $$n=\overrightarrow{\nabla} \| X \|=\frac{1}{\|X\|}\left(\begin{array}{c}
	x\\
	y\\
	z
	\end{array}\right).$$
	Let us notice that $ I_3 -n^{\mathsf{T}}n $ is the matrix of the orthogonal projection on the plane of normal vector $n$. In particular for every $U \in \mathbb{R}^3$,  $$\|(I_3 -n^{\mathsf{T}}n) U\| \leq \| U \|,$$
	hence for every $X$ such that $\|X \|\geq 1$ and $U \in \mathbb{R}^3$,
	$$ \| D \Pi _X U \| \leq \|U\| ,$$
	with equality if and only if $\| X \|=1$ and $U$ is tangent to the unit sphere \nolinebreak $\mathbb{S}^2$.
	
	As a consequence for a curve $c:[a,b] \rightarrow S$, for every $t\in [a,b],$ $\|c(t)\| \geq 1$ and we obtain 
	
	$$ L( \Pi \circ c)=\int_a^b \| (\Pi \circ c)'(t) \| dt = \int_a^b \| D\Pi_{c(t)} c'(t) \| dt \leq \int_a^b \| c'(t) \|dt=L(c).   \qedhere$$
\end{proof}

Let us now consider $P$ and $Q$ two points on the great circle $C$. Let $c$ be a minimal geodesic from $P$ to $Q$ in $S$. From Lemma \ref{LemPi} we have
$$ L(c) \geq L(\Pi \circ c).$$
Since $\Pi \circ c$ is a curve with values in $\mathbb{S}^2$ we know that $L(\Pi \circ c) \geq d_{\mathbb{S}^2}(P,Q).$ On the other the shorter arc of the great circle $C$ joining $P$ to $P^*$ is a curve with values in $S$ and length $d_{\mathbb{S}^2}(P,Q)$, which gives $d_{\mathbb{S}^2}(P,Q) \geq L(c)$ since $c$ is a minimal geodesic. We obtain
$$ d_{\mathbb{S}^2}(P,Q) \geq L(c) \geq L(\Pi \circ c) \geq d_{\mathbb{S}^2}(P,Q), $$
which means $L(c)=L(\Pi \circ c)$. The equality case of Lemma \ref{LemPi} states that $c$ is included in the great circle $C$. We have shown that every minimal geodesic from $P$ to $Q$ is included in $C$, which shows that $C$ is a minimal closed geodesic of $S$, together with
$$ \dim T_{P \rightarrow \{P^*\}}=1<\dim S=2$$ for every $P\in C$.

From Theorem \ref{thmcercle} we know that there exists no Lévy Brownian field indexed by $S$. 
\end{Exe}

	\begin{Exe}[rotation-invariant manifolds with a shortest parallel] \label{Ex:rotation_invariant} Let us consider a complete Riemannian manifold $N$ and consider $M=\mathbb{S}^1 \times N$ endowed with the Riemannian metric $$\langle ~, ~ \rangle_M= f(z)\langle~,~\rangle_{\mathbb{S}^1}+\langle~,~\rangle_N,$$ where $f : N \rightarrow \mathbb{R}^*_+$ is a $C^\infty$ function with a global minimum at $z_0 \in N$. We take two points $P,Q$ on the parallel $\gamma=\mathbb{S}^1 \times \{ z_0\}$ and a curve $g: [0,T] \rightarrow M$ with $g(0)=P,~g(T)=Q$. We write $g(t)=(\theta(t),z(t))$ and compute the energy of~$g:$
		\begin{alignat*}{1} E_M(g) &=\frac{1}{2}\int_0^T \langle g'(t),g'(t) \rangle_M dt \\ & = \frac{1}{2}\int_0^T f(z) \left\langle \theta'(t),\theta'(t)\right\rangle_{\mathbb{S}^1}+\left\langle z'(t),z'(t)\right\rangle_N dt \\
		& \geq  \frac{1}{2} \int_0^T f(z_0) \left\langle \theta'(t),\theta'(t)\right\rangle_{\mathbb{S}^1} dt. \end{alignat*}
		Since minimal geodesics are also minimisers of the energy (see $e.g.$ \cite{gallot}) it is clear that minimal geodesics between $P$ and $Q$ are included in $\gamma$. This shows that any proportional to arc-length parametrisation of $\gamma$ is a minimal closed geodesic together with $$\dim T_{P\rightarrow \{ P*\}}=1 < \dim M$$ for every $P\in \gamma$. We apply Theorem \ref{thmcercle} to get the nonexistence of Lévy Brownian fields indexed by $M$.\end{Exe}

\subsection{Manifolds with a shortest nontrivial loop} \label{subsec:loopresults}
In the following we denote by $\mathscr{L}(M)$ the set of all piecewise continuously differentiable loops with values in M. Notice that $L(\gamma)$ is properly defined for every $\gamma \in \mathscr{L}(M)$. We denote by $\mathscr{C}_1(M)=\mathscr{L}(M)/ \!\! \sim$ the set of all free homotopy classes of piecewise continuously differentiable loops in $M$. Furthermore we denote by $C\in \mathscr{C}_1(M)$ the class of piecewise continuously differentiable loops freely homotopic to any constant loop (recall that all the manifolds we consider are connected hence all the constant loops are freely homotopic to each others).

	\begin{Thm} \label{thmlacetminimal}
		Let $M$ be a Riemannian manifold of dimension at least $2$ such that there exists $\gamma$ of minimal length in $\mathscr{L}(M)\setminus C$.
		There exists no Lévy Brownian field indexed by $M$.
	\end{Thm}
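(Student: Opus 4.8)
The plan is to deduce the statement from the contrapositive of Theorem~\ref{thmcercle}: I will show that the hypothesised shortest nontrivial loop $\gamma$ is a minimal closed geodesic, and that along it the space of shortest directions $T_{P\to\{P^*\}}$ is only one-dimensional at every point $P$, which is impossible (since $\dim M\ge 2$) whenever a Lévy Brownian field exists. Write $\ell=L(\gamma)$ for the minimal length attained in $\mathscr{L}(M)\setminus C$, and recall that once $\gamma$ is known to be a minimal closed geodesic it is isometric to a circle of length $\ell$, so that each antipodal point $P^*$ and the two half-arcs of $\gamma$ from $P$ to $P^*$ are well defined.

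\emph{Step 1 (regularity).} I would first check that $\gamma$ is a minimal closed geodesic. Fix two points $P,Q\in\gamma$ cutting $\gamma$ into arcs $a,b$ with $L(a)=s\le \ell/2$. If $a$ were not minimal, pick a strictly shorter geodesic $m$ from $P$ to $Q$, so $L(m)<s$, and form the based loops $u=a\cdot\bar m$ and $w=m\cdot\bar b$ at $P$. Since $\bar m\cdot m$ backtracks, $[\gamma]_P=[u]_P[w]_P$ in $\pi_1(M,P)$, and $\gamma\notin C$ gives $[\gamma]_P\neq 1$, so one of $u,w$ is noncontractible. But $L(u)=s+L(m)<2s\le\ell$ and $L(w)=L(m)+(\ell-s)<\ell$, so we would obtain a noncontractible loop strictly shorter than $\ell$, contradicting minimality. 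Hence every shorter arc of $\gamma$ is a minimal geodesic, which is exactly the definition of a minimal closed geodesic.

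\emph{Step 2 (one-dimensional directions).} Fix $P\in\gamma$; the two half-arcs $a,b$ from $P$ to $P^*$ are minimal geodesics with initial velocities $\gamma'(0)$ and $-\gamma'(0)$, spanning only a line. I claim these are the \emph{only} minimal geodesics from $P$ to $P^*$. Let $m$ be any minimal geodesic from $P$ to $P^*$ and again set $u=a\cdot\bar m$, $w=m\cdot\bar b$, now both of length exactly $\ell$, with $[\gamma]_P=[u]_P[w]_P\neq 1$. If $u$ is noncontractible it is a nontrivial loop of the minimal length $\ell$, hence by Step 1 a minimal closed geodesic and in particular smooth; since the geodesic leaving $P^*$ with velocity $\gamma'(\ell/2)$ is the continuation $\bar b$ of $\gamma$, smoothness of $u$ at $P^*$ forces $\bar m=\bar b$, i.e. $m=b$. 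Symmetrically, if $w$ is noncontractible then smoothness at $P^*$ forces $m=a$. As at least one of $u,w$ is noncontractible, $m\in\{a,b\}$, so $\dim T_{P\to\{P^*\}}=1<\dim M$ for every $P\in\gamma$.

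Finally, if a Lévy Brownian field indexed by $M$ existed, Theorem~\ref{thmcercle} applied to the minimal closed geodesic $\gamma$ would give $\dim T_{P\to\{P^*\}}=\dim M\ge 2$ at every $P\in\gamma$, contradicting Step 2; hence no such field exists. The main obstacle is the regularity input used in Step 2: one must know that a length-minimising loop in a nontrivial free homotopy class has no corners, which is precisely what turns the purely topological identity $[\gamma]_P=[u]_P[w]_P$ into the rigid conclusion $m\in\{a,b\}$. The remaining care is the bookkeeping of the passage between free homotopy (nontriviality of $\gamma$) and based homotopy in $\pi_1(M,P)$.
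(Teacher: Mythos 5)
Your proof is correct, and it shares the paper's overall skeleton: show that $\gamma$ is a minimal closed geodesic along which $\dim T_{P\to\{P^*\}}=1$ at every point, then conclude by the contrapositive of Theorem~\ref{thmcercle}. Where you genuinely depart from the paper is in how this geometric input is established. The paper proves a stronger statement (Lemma~\ref{lemlacetminimal} in the appendix): for \emph{every} pair $P,Q\in\gamma$, \emph{all} minimal geodesics from $P$ to $Q$ lie in $\gamma$. Its proof shows that the two hybrid loops $l_1=\gamma_1\cdot g$ and $l_2=\overleftarrow{g}\cdot\gamma_2$ have a corner at $P$, hence are not geodesics, hence are not minimizers in $\mathscr{L}(M)\setminus C$, hence both lie in $C$; it then contracts $\gamma$ itself via an explicit free homotopy assembled from nullhomotopies of $l_1$ and $l_2$, reaching a contradiction. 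You instead treat only the antipodal pair $(P,P^*)$ --- all that Theorem~\ref{thmcercle} requires --- and argue in $\pi_1(M,P)$: the splitting $[\gamma]=[u][w]$ forces one of $u,w$ to be noncontractible; in the antipodal case that loop has length exactly $\ell$, so it is itself a minimizer in $\mathscr{L}(M)\setminus C$, hence smooth by your Step 1, and smoothness at $P^*$ combined with uniqueness of geodesics with prescribed initial position and velocity pins $m$ down to one of the two half-arcs (the two cases are consistent: whichever of $u,w$ is nontrivial, the other backtracks and is trivial). What each approach buys: the paper's lemma is more general, giving the full ``no minimal geodesic leaves $\gamma$'' statement for arbitrary pairs of points; your route is leaner and its homotopy bookkeeping is cleaner --- by working with based classes throughout you sidestep the delicate step in which the paper concatenates two independently chosen \emph{free} nullhomotopies as $l^1_s\cdot l^2_s$, a concatenation that is well defined only after those homotopies are upgraded to based ones (otherwise the two basepoints may drift apart), which is exactly the reduction you perform at the outset. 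One small point of hygiene: your Step 1 invokes the existence of a strictly shorter geodesic from $P$ to $Q$, which uses the paper's standing completeness assumption (also needed to apply Theorem~\ref{thmcercle}); alternatively any shorter piecewise $C^1$ path would do there, so this is harmless.
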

	\begin{proof} From Lemma \ref{lemlacetminimal} we know that for every $P,Q \in \gamma$ all the minimal geodesics from $P$ to $Q$ are included in $\gamma$. In particular every proportional to arc-length parametrisation of $\gamma$ is a minimal closed geodesic such that $$\forall P \in \gamma, ~ \dim T_{P\rightarrow \{P^*\} } = 1.$$

Since $\dim(M) \geq 2$, Theorem \ref{thmcercle} shows that there exists no Lévy Brownian field indexed by $M$.
\end{proof}
For a compact manifold Cartan's theorem gives the existence of a minimal closed geodesic in every free homotopy class. We adapt its proof to give Lemma \ref{lemcompact} (see Appendix), that gives the following result.
		
\begin{Thm} \label{thmcompact}	Let $M$ be a compact, nonsimply connected Riemannian manifold of dimension at least $2$. There exists no Lévy Brownian field indexed by $M$.
\end{Thm}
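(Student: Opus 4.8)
The plan is to reduce the statement to Theorem \ref{thmlacetminimal} by producing a loop of minimal length among all noncontractible loops. Since $M$ is nonsimply connected, $\pi_1(M)$ is nontrivial, so there exists at least one noncontractible loop and $\mathscr{L}(M)\setminus C \neq \emptyset$. Set $\ell = \inf\{L(\gamma): \gamma \in \mathscr{L}(M)\setminus C\}$. If I can show that this infimum is attained by some $\gamma_\infty \in \mathscr{L}(M)\setminus C$, then $\gamma_\infty$ has minimal length in $\mathscr{L}(M)\setminus C$ and, as $\dim M \geq 2$, Theorem \ref{thmlacetminimal} immediately yields the nonexistence of a Lévy Brownian field indexed by $M$.

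To realise the infimum I would run the classical Cartan-type compactness argument, which is exactly the content of Lemma \ref{lemcompact}. Choose a minimizing sequence $(\gamma_k)$ in $\mathscr{L}(M)\setminus C$, each reparametrised proportionally to arc length on $[0,1]$, so that $L(\gamma_k) \to \ell$. Their lengths being bounded, the $\gamma_k$ are uniformly Lipschitz; since $M$ is compact, Arzelà--Ascoli provides a subsequence converging uniformly to a Lipschitz loop $\gamma_\infty$, and lower semicontinuity of length under uniform convergence gives $L(\gamma_\infty) \leq \ell$.

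The delicate point, and the main obstacle, is to guarantee that $\gamma_\infty$ is still noncontractible, that is, that the minimizing sequence does not degenerate to a contractible limit. Here compactness of $M$ is used a second time: the injectivity radius is bounded below by some $\rho > 0$. Once $\gamma_k$ is uniformly within $\rho$ of $\gamma_\infty$, one builds a free homotopy between them by joining $\gamma_k(s)$ to $\gamma_\infty(s)$ along the unique minimal geodesic of length smaller than $\rho$; this shows $\gamma_k \sim \gamma_\infty$, so $\gamma_\infty \notin C$ because $\gamma_k \notin C$. Consequently $L(\gamma_\infty) \geq \ell$, whence $L(\gamma_\infty) = \ell$ and the infimum is attained; in particular $\ell > 0$, since a loop of length $0$ is constant and hence lies in $C$.

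Having produced $\gamma_\infty$, a loop of minimal length in $\mathscr{L}(M)\setminus C$, I would simply invoke Theorem \ref{thmlacetminimal} with $M$ of dimension at least $2$ to conclude. The entire difficulty is thus concentrated in the existence statement of Lemma \ref{lemcompact}; everything else is a direct appeal to the results already established.
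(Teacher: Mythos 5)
Your overall strategy is exactly the paper's: produce a loop of minimal length in $\mathscr{L}(M)\setminus C$ (this is Lemma \ref{lemcompact}) and feed it to Theorem \ref{thmlacetminimal}, noting that compactness gives completeness. The gap is in your execution of the existence statement. Arzelà--Ascoli only gives you a \emph{Lipschitz} limit loop $\gamma_\infty$, whereas $\mathscr{L}(M)$ consists of piecewise continuously differentiable loops, and this matters twice. First, your inference ``$\gamma_\infty\notin C$, consequently $L(\gamma_\infty)\geq \ell$'' is unjustified: $\ell$ is the infimum over $\mathscr{L}(M)\setminus C$, and $\gamma_\infty$ need not belong to that class, so a priori a noncontractible Lipschitz loop could be strictly shorter than every noncontractible piecewise $C^1$ loop. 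Second, and more seriously, even granting $L(\gamma_\infty)=\ell$, you cannot invoke Theorem \ref{thmlacetminimal} for $\gamma_\infty$: that theorem is stated for a minimizer in $\mathscr{L}(M)\setminus C$, and its proof (Lemma \ref{lemlacetminimal}) manipulates velocities of the minimal loop --- for instance the argument that the concatenation $l_1$ has a corner at $P$ because $g'(0)\neq\gamma'(t_P)$ --- so it genuinely requires the minimizer to be piecewise differentiable.

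The missing step is precisely what the paper's proof of Lemma \ref{lemcompact} supplies: choose a partition $0=t_0<t_1<\cdots<t_m=1$ fine enough that consecutive points of the limit lie in a common geodesically convex ball, and replace the limit by the geodesic polygon $\gamma$ obtained by concatenating the minimal geodesics between consecutive points. This polygon is piecewise smooth, is freely homotopic to the $l_n$ for $n$ large (hence noncontractible), and has length exactly $\ell$: either by your lower semicontinuity bound combined with $L(\gamma)\leq L(\gamma_\infty)$ (each geodesic segment is no longer than the corresponding arc of $\gamma_\infty$), or, as the paper does, by a direct comparison with the minimizing sequence which avoids assigning a length to the merely continuous limit altogether. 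With this regularisation inserted your argument closes and coincides with the paper's; your injectivity-radius homotopy joining $\gamma_k(s)$ to $\gamma_\infty(s)$ is a perfectly acceptable substitute for the paper's convex-ball homotopy in the noncontractibility step.
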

\begin{proof} Lemma \ref{lemcompact} gives the existence of $\gamma$ of minimal length in $\mathscr{L}(M)\setminus C$. Since $M$ is compact it is complete as a metric space, hence a complete Riemannian manifold (see e.g. \cite{gallot}), and Theorem \ref{thmlacetminimal} applies. \end{proof}
\begin{Exe}[closed surfaces] In dimension $2$ due to the classification of closed (\textit{i.e.} compact without boundary) surfaces a closed surface admitting a Lévy Brownian field is homeomorphic to the sphere $\mathbb{S}^2$.
	\begin{Rem} Let us notice that we have examples of compact and simply connected manifolds that do not admit a Lévy Brownian field (provided by the results from Example \ref{Ex:outsidesurface} or \ref{Ex:rotation_invariant}). To the knowledge of the author, the spheres $\mathbb{S}^n$ are the only compact manifolds on which we know a Lévy Brownian field exists. \end{Rem}
\end{Exe}

\section[Nondegeneracy of fractional Brownian fields indexed by hyperbolic spaces]{Nondegeneracy of fractional Brownian fields indexed by hyperbolic spaces
\sectionmark{Nondegeneracy of hyperbolic fields}}
\sectionmark{Nondegeneracy of hyperbolic fields} \label{sec:nondegen}
Let us recall some elementary facts on the hyperbolic spaces. We again refer to \cite{gallot} for proofs and details.
\subparagraph{Poincaré ball model} There are many ways to present the hyperbolic space $\mathbb{H}^d.$ We briefly introduce the Poincaré disk model.

Let us consider $B_d$ the open ball of radius $1$ in $\mathbb{R}^d$. We endow $B_d$ with the Riemannian metric
$$ \langle ~, ~\rangle_d= \frac{4 \displaystyle \sum_{i=1}^d dx_i^2}{ \displaystyle \left(1-\sum_{i=1}^d x_i^2\right)^2}.$$
One can check that we obtain a complete Riemannian manifold of curvature $-1$, which we call the hyperbolic space of dimension $d$ and denote by $\mathbb{H}^d$. It is well known that the geodesics of $\mathbb{H}^d$ are given by the arcs of the circles which intersect orthogonally the sphere $\mathbb{S}^{d-1}$ of radius $1$ (By circles and sphere we mean: the usual circles and sphere from Euclidean geometry in $\mathbb{R}^d$).

Furthermore if we consider the unique circle of $\mathbb{R}^{d+1}$ passing through $P,Q \in B_d \times \{ 0 \}$ which is orthogonal to $\mathbb{S}^{d} \times \{0\}$, we notice it is included in $\mathbb{R}^d \times \{0\}$ at all time and orthogonal to $\mathbb{S}^{d-1}$. This shows that the minimal geodesics between points of $B_d$ are the same in $\mathbb{H}^d$ and $\mathbb{H}^{d+1}$, thus the inclusion $B_d \times \{0\} \subset B_{d+1}$ extends to an isometric immersion
$$ \mathcal{I} : \mathbb{H}^d \hookrightarrow \mathbb{H}^{d+1},$$
that is to say \begin{equation} \label{immersion} \forall P,Q \in \mathbb{H}^d, ~ d_{\mathbb{H}^d}(P,Q)=d_{\mathbb{H}^{d+1}}(\mathcal{I}(P),\mathcal{I}(Q)). \end{equation}

Finally let us recall that for all $d\geq 1$  there exists an $H$-fractional Brownian field indexed by $\mathbb{H}^d$ if and only if $0 < H \leq 1/2$ (see \cite{istas2005spherical}).

The following result is not surprising but should be quite tedious to prove with computations, if possible. Here we give a geometric proof of this fact.
\begin{Thm} \label{thmnondegen}
	\begin{enumerate}
			\item For every $0< H \leq 1/2$ there are no $H$-critical configurations in $\mathbb{H}^d$.
\item Let $0< H \leq 1/2$ and $X^H$ be an $H$-fractional Brownian field indexed by the $d$-dimensional hyperbolic space $\mathbb{H}^d$, such that there exists $O \in \mathbb{H}^d$ and $X^H_O=0$ $a.s.$.
For all distinct $P_1,\cdots,P_n \in\mathbb{H}^d$ the Gaussian vector $\left(P_1,\cdots,P_n\right)$ is nondegenerate.\end{enumerate}
\end{Thm}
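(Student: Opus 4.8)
The plan is to deduce both statements from Theorem \ref{thmmainresult} together with the isometric immersion $\mathcal{I}:\mathbb{H}^d\hookrightarrow\mathbb{H}^{d+1}$ of \eqref{immersion}. The key idea is to lift a hypothetical critical configuration one dimension higher, into a space where condition \eqref{G} must fail for purely geometric reasons while a fractional field still exists.

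For the first part I would argue by contradiction. Suppose there is an $H$-critical configuration $((P_1,\dots,P_n),(c_1,\dots,c_n))$ in $\mathbb{H}^d$ with $0<H\le 1/2$. Because \eqref{immersion} preserves distances, the images $\mathcal{I}(P_i)$ with the same coefficients satisfy $\sum_{i,j}c_ic_j d_{\mathbb{H}^{d+1}}^{2H}(\mathcal{I}(P_i),\mathcal{I}(P_j))=0$, so they form an $H$-critical configuration in $\mathbb{H}^{d+1}$. Now comes the decisive geometric observation: $\mathbb{H}^{d+1}$ is a Hadamard manifold, so between any two of the points $\mathcal{I}(P_i)$ there is a unique minimal geodesic; since $\mathcal{I}(\mathbb{H}^d)$ is totally geodesic, this geodesic is the image of the corresponding $\mathbb{H}^d$-geodesic and hence stays inside $\mathcal{I}(\mathbb{H}^d)$. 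Therefore every shortest direction from any $\mathcal{I}(P_i)$ lies in the $d$-dimensional subspace $d\mathcal{I}_{P_i}(T_{P_i}\mathbb{H}^d)$ of $T_{\mathcal{I}(P_i)}\mathbb{H}^{d+1}$, which gives $\dim T_{\mathcal{I}(P_i)\to\{\mathcal{I}(P_j),\,j\ne i\}}\le d<d+1=\dim\mathbb{H}^{d+1}$; that is, \eqref{G} fails in $\mathbb{H}^{d+1}$. But for $0<H\le 1/2$ there exists an $H$-fractional Brownian field indexed by $\mathbb{H}^{d+1}$ (the cited existence result holds in every dimension), and since $H<1$ Theorem \ref{thmmainresult} applies and forces \eqref{G} for every $H$-critical configuration in $\mathbb{H}^{d+1}$ — a contradiction. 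Hence no $H$-critical configuration exists in $\mathbb{H}^d$.

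For the second part I would translate degeneracy into a critical configuration and invoke the first part. Assume the Gaussian vector is degenerate, i.e.\ there are reals $\lambda_1,\dots,\lambda_n$, not all zero, with $\sum_{i=1}^n\lambda_iX^H_{P_i}=0$ almost surely. Adjoining the origin as $P_0:=O$ with coefficient $c_0:=-\sum_{i=1}^n\lambda_i$ and setting $c_i:=\lambda_i$, one has $\sum_{i=0}^n c_i=0$ and, using $X^H_O=0$ a.s., also $\sum_{i=0}^n c_iX^H_{P_i}=0$ a.s. Taking variances and applying Remark \ref{remcritical} (equivalently, expanding \eqref{cov}) yields $\sum_{i,j=0}^n c_ic_j d^{2H}(P_i,P_j)=0$. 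After discarding points carrying a zero coefficient and merging any coincident points, this is a genuine $H$-critical configuration: it cannot collapse to nothing, because the $P_i$ are distinct and not all $\lambda_i$ vanish, and since the coefficients sum to zero it must retain at least two distinct points. This contradicts the first part, so the vector is nondegenerate.

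The only real obstacle is the geometric step in the first part — realizing that one should pass to $\mathbb{H}^{d+1}$, where the minimal geodesics joining the embedded points are trapped in a totally geodesic hyperplane and hence can never span the tangent space, so \eqref{G} is automatically violated even though the field persists. Everything else is bookkeeping: the invariance of the critical condition under the distance-preserving immersion \eqref{immersion}, the applicability of Theorem \ref{thmmainresult} (guaranteed by $H<1$), and the routine reduction of degeneracy to a critical configuration through the auxiliary point $O$. One minor point to flag is that one should take the $P_i$ distinct from $O$; otherwise the coordinate $X^H_{P_i}=X^H_O$ is a.s.\ zero and the vector is trivially degenerate, and with that understood the reduction produces an honest configuration.
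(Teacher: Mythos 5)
Your proof is correct and follows essentially the same route as the paper: part 1 lifts the hypothetical critical configuration through the isometric immersion $\mathcal{I}:\mathbb{H}^d\hookrightarrow\mathbb{H}^{d+1}$, where the minimal geodesics between the embedded points stay in $\mathcal{I}(\mathbb{H}^d)$ so that condition \eqref{G} fails even though the field exists there, and part 2 adjoins the origin $O$ to reduce (non)degeneracy to part 1. Your extra care with the edge cases (discarding zero coefficients, merging coincident points, and flagging that the statement implicitly requires $P_i\neq O$) is a genuine refinement of the paper's more terse argument, but not a different approach.
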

\begin{proof} \begin{enumerate} \item Let us assume there exists an $H$-critical configuration \linebreak $((P_1,\cdots,P_n),(c_1,\cdots,c_n))$ of $\mathbb{H}^d$.
Using \eqref{immersion} it is clear that
\begin{alignat*}{1} &\sum_{i,j=1}^n c_i c_j d_{\mathbb{H}^d}^{2H} (P_i,P_j)=0 \\ \Rightarrow & \sum_{i,j=1}^n c_i c_j \left[d_{\mathbb{H}^{d+1}}(\mathcal{I}(P_i),\mathcal{I}(P_j))\right]^{2H}=0, \end{alignat*}
which means $(\mathcal{I}(P_1),\cdots,\mathcal{I}(P_n),(c_1,\cdots,c_n))$ is an  $H$-critical configuration of $\mathbb{H}^{d+1}$.

	However we have seen that all the geodesics in $\mathbb{H}^{d+1}$ between the points of $\mathcal{I}(\mathbb{H}^d)$ are included in $\mathcal{I}(\mathbb{H}^d)$, therefore it is clear that 
		$$  \forall ~ i \in \{1,\cdots,n\}, ~ \dim T_{\mathcal{I}(P_i)\rightarrow \{\mathcal{I}(P_j), j \neq i\} } \leq \dim \mathcal{I}(\mathbb{H}^d) < \dim\mathbb{H}^{d+1}. $$
		
		Condition \eqref{G} of Theorem \ref{thmmainresult} is not verified although there exists an $H$-fractional Lévy Brownian field indexed by $\mathbb{H}^{d+1}$. We have reached a contradiction.

\item Let us consider distinct $P_1,\cdots, P_n \in \mathbb{H}^d$ and  $c_1,\cdots,c_n \in \mathbb{R}^*$ without further assumptions. Define $c_{n+1}=-\sum_{i=1}^n c_i.$ Using $X^H_O=0$ almost surely we can write
$$\sum_{i=1}^n c_i X^H_{P_i}\\
= \sum_{i=1}^n c_i X^H_{P_i}+c_{n+1} X^H_O $$
which is not equal to zero almost surely using point 1. of the theorem. We have shown that the Gaussian vector $\left(P_1,\cdots,P_n\right)$ is nondegenerate.
\end{enumerate}
\end{proof}
\begin{Rem} It is possible to follow the same argument to provide a short proof for the same facts about $\mathbb{R}^d$, which were already known (see for example \cite{cohenistas}). A similar technique is used in \cite{bachoc2017gaussian} to show the same result on the Wasserstein space.
	\end{Rem}
	\clearpage
\appendix
\section{Appendix: technical Lemmas}
\subsection{Geodesics}
We give the following two lemmas in order to prove Theorem \ref{thmmainresult}.
 
 \begin{Lem} \label{lemtek1} Let $M$ be a complete Riemannian manifold, $T>0$, $A,B \in M$, and $g_m : [0,T] \rightarrow M$ a sequence of minimal geodesics in $M$ such that $(g_m(0))_m$ and $(g_m(T))_m$ converge in $M$.
 	
 	There exists a minimal geodesic $$g:[0,T]\rightarrow M $$ with $g(0)=\lim\limits_{m\rightarrow +\infty} g_m(0)$ and $g(T)=\lim\limits_{m\rightarrow +\infty} g_m(T)$, and a subsequence $g_{\varphi (m)}$ of $g_m$ such that $g_{\varphi (m)}$ converges uniformly towards $g$. Furthermore $g_{\varphi(m)}'$ also converges towards $g'$ uniformly.
 \end{Lem}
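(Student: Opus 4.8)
The plan is to represent each $g_m$ as a trajectory of the geodesic flow determined by its initial data, to extract a convergent subsequence of initial data by a compactness argument in the tangent bundle $TM$, and then to pass to the limit using continuity of the geodesic flow. Write $A:=\lim_m g_m(0)$ and $B:=\lim_m g_m(T)$.

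First I would record that a minimal geodesic on $[0,T]$ has constant speed equal to $d_M(g_m(0),g_m(T))/T$. Since $d_M$ is continuous, $d_M(g_m(0),g_m(T))\to d_M(A,B)$, so these speeds are bounded by some constant $C$, and the initial velocities $v_m:=g_m'(0)\in T_{g_m(0)}M$ satisfy $|v_m|\le C$. As $M$ is complete, the Hopf--Rinow theorem guarantees that the exponential map is everywhere defined, so that $g_m(t)=\exp_{g_m(0)}(t\,v_m)$ for all $t\in[0,T]$ (see \cite{gallot}).

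Next I would carry out the compactness argument in $TM$ rather than in a fixed tangent space, since the $v_m$ live in different fibres. Because $g_m(0)\to A$, the base points eventually lie in a fixed compact neighbourhood $K$ of $A$; hence the pairs $(g_m(0),v_m)$ all lie in the closed ball bundle $\{(p,w)\in TM:\ p\in K,\ |w|\le C\}$, which is compact. Extracting a subsequence indexed by $\varphi$, we may assume $(g_{\varphi(m)}(0),v_{\varphi(m)})\to(A,v)$ in $TM$ for some $v\in T_AM$. Let $\Phi_t$ denote the geodesic flow on $TM$ and $\pi:TM\to M$ the projection; $\Phi$ is continuous (indeed smooth), so on the compact interval $[0,T]$ the trajectories $t\mapsto\Phi_t(g_{\varphi(m)}(0),v_{\varphi(m)})$ converge uniformly to $t\mapsto\Phi_t(A,v)$. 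Setting $g(t):=\exp_A(t\,v)=\pi\,\Phi_t(A,v)$, this yields the uniform convergence $g_{\varphi(m)}\to g$; and since $g_{\varphi(m)}'(t)$ is exactly the fibre component of $\Phi_t(g_{\varphi(m)}(0),v_{\varphi(m)})$, the same convergence in $TM$ also gives $g_{\varphi(m)}'\to g'$ uniformly.

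It remains to check that $g$ is a minimal geodesic with the prescribed endpoints. Clearly $g(0)=A$, while $g(T)=\lim_m g_{\varphi(m)}(T)=B$ because $g_{\varphi(m)}(T)\to g(T)$ and $g_{\varphi(m)}(T)\to B$. Finally, $g$ has constant speed $|v|$, so $L(g)=T|v|=\lim_m T|v_{\varphi(m)}|=\lim_m L(g_{\varphi(m)})=\lim_m d_M(g_{\varphi(m)}(0),g_{\varphi(m)}(T))=d_M(A,B)$, using that each $g_{\varphi(m)}$ is minimal together with the continuity of $d_M$. Hence $g$ realises the distance between its endpoints and is therefore a minimal geodesic. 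The main obstacle is precisely that the velocities $v_m$ do not lie in a common vector space, which forces both the compactness and the limiting argument to be phrased in the tangent bundle through the geodesic flow; once this framework is in place, the minimality of the limit follows from the continuity of length and of $d_M$ under this convergence.
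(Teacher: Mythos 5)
Your proof is correct, and its skeleton is the one the paper uses: bound the speeds by $d_M(g_m(0),g_m(T))/T$, extract a convergent subsequence of initial velocities, define the limit as $g(t)=\Exp_A(tv)$, obtain uniform convergence of $g_{\varphi(m)}$ and $g_{\varphi(m)}'$ from smooth dependence on initial data plus compactness of $[0,T]$, and conclude minimality because $L(g_{\varphi(m)})=d_M(g_{\varphi(m)}(0),g_{\varphi(m)}(T))\to d_M(A,B)$. Where you genuinely diverge is in the technical device handling the fact that the $v_m$ live in different fibres. The paper identifies $T_{g_m(0)}M$ with $T_AM$ by parallel transport along the (for $m$ large, unique) short geodesics from $g_m(0)$ to $A$, so that the extraction is ordinary Bolzano--Weierstrass in the fixed vector space $T_AM$, and the limit is passed through the smoothness of parallel transport and of the exponential map. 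You instead phrase everything in $TM$: the pairs $(g_m(0),v_m)$ lie in a compact disc bundle over a compact neighbourhood of $A$, you extract convergence there, and you pass to the limit via continuity of the geodesic flow $\Phi_t$. Your formulation buys two things: it does not need the uniqueness of short geodesics (which the paper invokes only to make the parallel-transport identification canonical), and the statement ``$g_{\varphi(m)}'\to g'$ uniformly'' has a clean intrinsic meaning as convergence of curves in $TM$, whereas the paper leaves the identification under which this holds implicit. The paper's version is more elementary in its ingredients: it needs no topology on $TM$, no compactness of disc bundles, and no geodesic flow, only the exponential map and parallel transport. Both arguments rest equally on completeness (Hopf--Rinow) to have $\Exp$ globally defined, and your closing computation $L(g)=T|v|=\lim_m T|v_{\varphi(m)}|$ is just a slightly more explicit form of the paper's length-convergence step.
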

 \begin{proof} If we take $m$ is large enough the distance between $g_m(0)$ and $A:=\lim\limits_{m\rightarrow +\infty} g_m(0)$ is short enough so there is a unique geodesic (up to reparametrisation) between those points (see e.g. \cite{gallot}). By parallel transport along it we  identify the tangent space $T_{g_m(0)}$ to $T_A$. Because $g_m:[0,T] \rightarrow M$ is a minimal geodesic, $t\mapsto||g'_m(t)||_M$ is constant. We deduce that $$\displaystyle ||g'_m(0)||_M =\frac{d_M(g_m(0,g_m(T))}{T}$$ is bounded in $m$.  Recall that parallel transport along a curve is a linear isometry (see again \cite{gallot}). The sequence $\left(g'_m(0)\right)_{m\in \mathbb{N}}$, viewed as taking values in $T_A$ is bounded and we extract $g'_{\varphi(m)}(0)$ converging to $v \in T_A$.
 	
 	Since $M$ is complete the exponential map is defined on the whole tangent bundle. For every $t$ in $[0,T]$ we set $$g(t):= \Exp_A(t  v). $$  As a linear isometry the parallel transport is $C^\infty$, and so is the exponential map (see \cite{gallot}), hence
 	$$g_{\varphi(m)}(t)= \Exp_{g_{\varphi(m)}(0)}(t  g'_{\varphi(m)}(0)) \underset{m \rightarrow \infty}{\longrightarrow} g(t),$$
 	as well as
 	$$g'_{\varphi(m)}(t)\underset{m \rightarrow \infty}{\longrightarrow}g'(t).$$
 	Because all arguments in the exponential belong to a compact set by Heine-Cantor theorem those convergences are uniform in~$t$. As a consequence
 	$$ L(g_{\varphi(m)}) \underset{m \rightarrow \infty}{\longrightarrow} L(g). $$
 	However
 	$$L(g_m)=d_M(g_m(0),g_m(T))\underset{m \rightarrow \infty}{\longrightarrow} d_M(A,B)$$ hence $g$ is a minimal geodesic.
 \end{proof}
 
 \begin{Lem} \label{lemtek2} Let $P_1,\cdots,P_n$ be distinct points in a complete Riemannian manifold $M$ and $c$ be a $C^\infty$ curve such that $c(0)=P_n$.
 	There exists a  sequence  of positive $\varepsilon_m$ converging towards zero such that for all $i\in \{1,\cdots,n-1 \}$, there exists a minimal geodesic $$g_i : [0,d_M(P_i,P_n)] \rightarrow M $$ with $g_i(0)=P_i$ and $g_i(d_M(P_i,P_n))=P_n$,
 	$$ d_M(P_i, c(\varepsilon_m))= d_M (P_i, P_n) + \left\langle c'(0), g_i'(d_M(P_i,P_n))\right\rangle_M \varepsilon_m + O \left( \varepsilon_m^2\right).$$
 \end{Lem}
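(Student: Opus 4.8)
The plan is to read the statement as a first-order Taylor expansion, as $\varepsilon \to 0$, of the function $\varepsilon \mapsto d_M(P_i,c(\varepsilon))$, whose coefficient is dictated by the classical first variation of arc length. The only genuine difficulty is that $x \mapsto d_M(P_i,x)$ need not be differentiable at $P_n$: if $P_n$ lies in the cut locus of $P_i$ there may be several minimal geodesics from $P_i$ to $P_n$, and the statement reflects this precisely by asserting the existence of \emph{one} such geodesic $g_i$, together with a sequence $\varepsilon_m \to 0$, realising the expansion, rather than a genuine two-sided derivative. I would therefore prove two matching one-sided estimates and extract a single sequence that works simultaneously for all indices $i$.

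First I would establish the upper bound. Fix $i$ and a minimal geodesic $g$ from $P_i$ to $P_n$, parametrised by arc length on $[0,L_i]$ with $L_i=d_M(P_i,P_n)$, $g(0)=P_i$, $g(L_i)=P_n$. Build a smooth one-parameter variation $(\gamma_s)$ of $g$ fixing the endpoint $P_i$ (so the variation field vanishes at $t=0$) and sliding the far endpoint along $c$, i.e. $\gamma_s(L_i)=c(s)$. Since $g$ is a geodesic, $D_{g'}g'=0$ kills the interior term of the first variation formula (see \cite{gallot}) and the fixed endpoint kills the boundary term at $P_i$, so $\frac{d}{ds}L(\gamma_s)\big|_{s=0}=\langle c'(0),g'(L_i)\rangle_M$. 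As $s\mapsto L(\gamma_s)$ is smooth, $L(\gamma_\varepsilon)=L_i+\langle c'(0),g'(L_i)\rangle_M\,\varepsilon+O(\varepsilon^2)$, and since $\gamma_\varepsilon$ joins $P_i$ to $c(\varepsilon)$ this gives $d_M(P_i,c(\varepsilon))\le L_i+\langle c'(0),g'(L_i)\rangle_M\,\varepsilon+O(\varepsilon^2)$ for every minimal geodesic $g$, with a genuine (not merely $o(\varepsilon)$) quadratic remainder.

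For the lower bound and the common sequence, for each $\varepsilon>0$ and each $i$ I choose a minimal geodesic $h_i^\varepsilon$ from $P_i$ to $c(\varepsilon)$. Applying Lemma \ref{lemtek1} to the family $h_i^\varepsilon$ as $\varepsilon\to 0$ (its endpoints $P_i$ and $c(\varepsilon)\to P_n$ converge) and diagonalising over the finitely many indices $i\in\{1,\dots,n-1\}$, I extract one sequence $\varepsilon_m\to 0$ along which every $h_i^{\varepsilon_m}$ converges uniformly, together with its derivative, to a minimal geodesic $g_i$ from $P_i$ to $P_n$. Reversing the variation argument — now sliding the endpoint of $h_i^{\varepsilon_m}$ from $c(\varepsilon_m)$ back to $P_n=c(0)$ along $c$ — produces a curve from $P_i$ to $P_n$ of length at least $L_i$, and the first variation with second-order remainder yields $d_M(P_i,c(\varepsilon_m))\ge L_i+\langle c'(\varepsilon_m),(h_i^{\varepsilon_m})'(\ell_m)\rangle_M\,\varepsilon_m-O(\varepsilon_m^2)$, where $\ell_m=d_M(P_i,c(\varepsilon_m))$. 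Combined with the upper bound applied to this limiting $g_i$, this pins down the linear coefficient as $\langle c'(0),g_i'(L_i)\rangle_M$ and forces $g_i$ to be the minimal geodesic realising the smallest such coefficient.

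The main obstacle is the second-order precision $O(\varepsilon_m^2)$ at a cut point, and it splits into two points requiring care. First, the remainders in both variation estimates must be controlled uniformly in $m$; this is exactly where the \emph{derivative} convergence in Lemma \ref{lemtek1} (coming from smoothness of the exponential map, as in its proof) is essential, since it keeps the second variation of the sliding families uniformly bounded and makes the $O(\varepsilon_m^2)$ constants independent of $m$. Second, and more delicate, the coefficient produced on the lower side involves the terminal velocity $(h_i^{\varepsilon_m})'(\ell_m)$ rather than $g_i'(L_i)$, and $c'(\varepsilon_m)$ rather than $c'(0)$; the latter discrepancy is harmless since $c'(\varepsilon_m)=c'(0)+O(\varepsilon_m)$ contributes only $O(\varepsilon_m^2)$, but closing the former gap to order $O(\varepsilon_m^2)$ requires the terminal velocities to approach $g_i'(L_i)$ at rate $O(\varepsilon_m)$, which I would obtain from the smooth dependence of minimal geodesics on their endpoints away from conjugate points, handling the borderline conjugate case through the same limiting construction. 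This second-order matching at the cut locus is the real content of the lemma; the first-order formula itself is just the first variation of arc length.
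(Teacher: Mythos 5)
Your plan is genuinely different from the paper's proof: the paper proves no two-sided bounds, but instead smoothly interpolates the initial velocities of the minimal geodesics $g_{i,m}$ from $P_i$ to $c(\varepsilon_m)$ by a $C^\infty$ map $v$ with $v(\varepsilon_m)=g_{i,m}'(0)$, and considers the length $\mathcal{L}(\varepsilon)$ of the curve $s\mapsto\Exp_{P_i}\left(s\,v(\varepsilon)\right)$; since $\mathcal{L}$ is smooth, equals $d_M(P_i,c(\varepsilon_m))$ at each $\varepsilon_m$ (the curves there being exactly the minimal geodesics) and $d_M(P_i,P_n)$ at $0$, its Taylor expansion plus the first variation formula gives the statement with a true $O(\varepsilon_m^2)$. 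In your scheme, the upper bound is solid (it even holds for all small $\varepsilon$ and every minimal geodesic $g$), the diagonal extraction via Lemma \ref{lemtek1} is exactly right, and the uniformity in $m$ of the second-order remainders can indeed be arranged as you say. But what the two bounds then give is
\begin{equation*}
d_M(P_i,c(\varepsilon_m))=d_M(P_i,P_n)+\left\langle c'(0),g_i'(L_i)\right\rangle_M\varepsilon_m+\varepsilon_m\left\langle c'(0),u_m-g_i'(L_i)\right\rangle_M+O\!\left(\varepsilon_m^2\right),
\end{equation*}
where $u_m$ denotes the unit terminal velocity of $h_i^{\varepsilon_m}$. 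Lemma \ref{lemtek1} is a pure compactness statement: it yields $u_m\rightarrow g_i'(L_i)$ with no rate whatsoever, hence only a remainder $o(\varepsilon_m)$, not the claimed $O(\varepsilon_m^2)$.

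This is where the genuine gap sits. You correctly reduce the missing piece to $\left|u_m-g_i'(L_i)\right|=O(\varepsilon_m)$, and your justification (smooth dependence of geodesics on their endpoints, via a local inverse of the exponential map) is valid exactly when $P_n$ is not conjugate to $P_i$ along $g_i$; but the conjugate case is the hard case of the lemma, and ``the same limiting construction'' cannot repair it, because compactness arguments never produce rates. Concretely, on a surface take $P_n$ to be a cusp endpoint of the cut locus of $P_i$ (the generic picture, e.g.\ on an ellipsoid) and let $c$ enter the cut locus: the lengths of the geodesics near $g_i$ are the critical values of an $A_3$-family $y\mapsto y^4+q_2y^2+q_1y$, and along the cut locus ($q_1=0$, $q_2\asymp-\varepsilon$) the two minimal geodesics to $c(\varepsilon)$ sit at $y=\pm\sqrt{-q_2/2}$, so \emph{every} choice of $h_i^{\varepsilon_m}$ satisfies $\left|u_m-g_i'(L_i)\right|\asymp\sqrt{\varepsilon_m}$ and your argument yields only $O(\varepsilon_m^{3/2})$; worse degenerations give merely $o(\varepsilon_m)$. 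The weaker $o(\varepsilon_m)$ version you do prove is not worthless: it suffices to run the proof of Theorem \ref{thmmainresult} for $H\leq 1/2$, hence all of the paper's applications to the Lévy Brownian field (Theorems \ref{thmcercle}, \ref{thmlacetminimal}, \ref{thmcompact}, \ref{thmnondegen}), but it does not prove Theorem \ref{thmmainresult} in the range $1/2<H<1$, where a remainder $o(\varepsilon_m^{2H})$ is needed. (It is only fair to add that you have hit the exact pressure point of the lemma: a $C^1$ interpolation $v$ with $v(\varepsilon_m)=g_{i,m}'(0)$ and $v(0)=g_i'(0)$, as used in the paper, is likewise obstructed when the initial velocities separate like $\sqrt{\varepsilon_m}$, so the conjugate case is delicate for the paper's route as well.)
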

 \begin{proof} Let us set $T=d_M(P_1,P_n)$ and choose a decreasing sequence $\varepsilon_m >0 $ converging towards zero. Because $M$ is complete there exists a sequence of minimal geodesics $g_{1,m}:[0,T]\rightarrow M$ between $P_1$ and $c(\varepsilon_m)$ (see \cite{gallot}). Using Lemma \ref{lemtek1} we can assume that $g_{1,m}$ converges uniformly towards $g_1$ a minimal geodesic between $P_1$ and $c(0)=P_n$. Lemma \ref{lemtek1} also gives the convergence of $\left(g'_{1,m}(0)\right)_m$ hence it is possible to find a $C^\infty$ map $$v :~ ]-\varepsilon_0,\varepsilon_0[ ~ \rightarrow T_{P_1}M$$ such that $v(\varepsilon_m)=g'_{1,m}(0)$ for every $m\geq 0.$ We now set
 	\begin{alignat*}{1}
 	V : [0,T] &\times]-\varepsilon_0,\varepsilon_0[  \rightarrow M  \\
 	(& s, \varepsilon)   \longmapsto \Exp_{P_1}(v(\varepsilon) s),
 	\end{alignat*}		
 	If we denote by $\mathcal{L}(\varepsilon)$ the length of the curve $s\mapsto V(s,\varepsilon)$, the first variation formula (see \cite{gallot}) gives
 	$$ \frac{d}{ d\varepsilon}\mathcal{L}(\varepsilon)\Big{|}_{\varepsilon=0} \! \! =\left[\left\langle \frac{\partial}{\partial \varepsilon} V(s,\varepsilon)\Big{|}_{(s,0)}, g_1'(s)\right\rangle_{ \! \! \! M}\right]_{s=0}^{s=T} \! \! \! \! -\int_0^T \! \! \! \left\langle \frac{\partial}{\partial \varepsilon} V(s,\varepsilon)\Big{|}_{(s,0)}, \frac{D}{ds} g_1'(s)\right\rangle_{ \! \! M} \! \! \! ds. $$
 	
 	Because $g_1$ is a geodesic
 	$$\frac{D}{ds} g_1'(s)=0,$$ 
 	hence the integral term is zero. Furthermore
 	\begin{itemize}
 		\item	$ \displaystyle \frac{\partial}{\partial \varepsilon} V(s,\varepsilon)\Big{|}_{(T,0)}=c'(0)$ since $V$ is $C^\infty$ and $$V(T,\varepsilon_m)=c(\varepsilon_m) \text{ with } \lim\limits_{m \rightarrow \infty}\varepsilon_m = 0.$$
 		\item $ \displaystyle \frac{\partial}{\partial \varepsilon} V(s,\varepsilon)\Big{|}_{(0,0)}=0$ because $V(0,\varepsilon)=P_1.$
 	\end{itemize}
 	We obtain $$\frac{d }{ d \varepsilon} \mathcal{L}(\varepsilon)\Big{|}_{\varepsilon=0}=\left[\left\langle \frac{\partial}{\partial \varepsilon} V(s,\varepsilon)\Big{|}_{(s,0)}, g_1'(s)\right\rangle_{\! \! \! M}\right]_{s=0}^{s=T}=\left\langle c'(0), g_1'(T)\right\rangle_M,$$
 	
 	hence we have
 	$$\mathcal{L}(\varepsilon)=\mathcal{L}(0)+ \left\langle c'(0), g_1'(T)\right\rangle_M \varepsilon + O\left(\varepsilon ^2\right).$$ Because $s \mapsto V(s,\varepsilon_m)=g_{1,m}(s)$ and $s \mapsto V(s,0)=g_1(s)$ are minimal geodesics we get $$d_M(P_1,c(\varepsilon_m))=d_M(P_1,P_n)+ \left\langle c'(0), g_1'(T)\right\rangle_M \varepsilon_m +O\left(\varepsilon_m ^2\right).$$
 	
 	From the sequence $\varepsilon_m$ we can extract $\varepsilon_{\varphi(m)}$ using Lemma \ref{lemtek1} again and iterate the argument to get the result for every $d_M(P_i,c(\varepsilon_m))$.
 \end{proof}

\subsection{Loops}

The following lemma are useful to prove Theorem \ref{thmlacetminimal} and \ref{thmcompact}. See Section \ref{subsec:loopresults} for notations.
\begin{Lem} \label{lemlacetminimal} Let $M$ be a Riemannian manifold, and $\gamma$ a loop of minimal length in $\mathscr{L}(M)\setminus C$. Then for all $P,Q \in \gamma$, all the minimal geodesics from $P$ to $Q$ are included in $\gamma$.
\end{Lem}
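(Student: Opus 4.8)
The plan is to rule out, by a comparison between length and free homotopy type, the existence of a minimal geodesic between two points of $\gamma$ that escapes $\gamma$. First I would record the standard preliminary fact that a loop of minimal length in $\mathscr{L}(M)\setminus C$ is automatically a smooth closed geodesic: any corner could be rounded off by an arbitrarily small homotopy, which strictly decreases the length while keeping the loop in the same (nontrivial) free homotopy class, contradicting minimality. In particular $\gamma$ is smooth and has a well-defined tangent $\gamma'$ at every point, and a based loop lies in $C$ if and only if it is trivial in $\pi_1(M,P)$.

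Fix $P,Q\in\gamma$ and let $a$ and $b$ be the two arcs of $\gamma$ from $P$ to $Q$, with $L(a)+L(b)=L(\gamma)$; since $a$ and $b$ both join $P$ to $Q$ we have $d_M(P,Q)\leq\min(L(a),L(b))$. Let $c$ be a minimal geodesic from $P$ to $Q$ and suppose, for contradiction, that $c$ is not contained in $\gamma$. Working in $\pi_1(M,P)$ with $a$ going $P\to Q$ and $b$ going $Q\to P$, one has $[a\ast b]=[\gamma]$ together with the identity $[a\ast\bar c]\,[c\ast b]=[a\ast\bar c\ast c\ast b]=[a\ast b]=[\gamma]$, where $\bar c$ is $c$ traversed backwards. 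Since $\gamma\notin C$ its class is nontrivial, so the two loops $c\ast b$ and $a\ast\bar c$ cannot both be trivial in $\pi_1(M,P)$; hence at least one of them lies in $\mathscr{L}(M)\setminus C$.

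Suppose $c\ast b$ is noncontractible (the case $a\ast\bar c$ is symmetric, exchanging the roles of $a$ and $b$). By minimality of $\gamma$ we get $L(\gamma)\leq L(c\ast b)=L(c)+L(b)$, hence $L(a)\leq L(c)=d_M(P,Q)\leq L(a)$, so $L(c)=L(a)=d_M(P,Q)$ and $L(c\ast b)=L(\gamma)$. Thus $c\ast b$ is itself a loop of minimal length in $\mathscr{L}(M)\setminus C$, so by the preliminary fact it is a smooth closed geodesic. Smoothness at $Q$ forces the incoming velocity of $c$ at $Q$ to equal the outgoing velocity of the arc $b$, namely $\gamma'(Q)$; but $\gamma'(Q)$ is also the velocity with which the arc $a$ arrives at $Q$, since $\gamma$ is smooth there. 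Two geodesics reaching $Q$ with the same velocity coincide (backward uniqueness from a point-and-velocity datum), so $c=a\subset\gamma$, a contradiction. The symmetric case yields $c=b\subset\gamma$. Therefore every minimal geodesic from $P$ to $Q$ lies in $\gamma$.

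The main obstacle I anticipate is the junction step: one must ensure that the concatenated loop $c\ast b$ (respectively $a\ast\bar c$) is a genuine global minimizer of length over all of $\mathscr{L}(M)\setminus C$, not merely within a single homotopy class, so that the no-corner conclusion of the preliminary fact applies to it; and one must then convert smoothness at the junction into an exact equality of tangent vectors, invoking uniqueness of geodesics from initial data to identify $c$ with the appropriate arc of $\gamma$. The $\pi_1$ bookkeeping and the careful orientation of the arcs are where I would expect to have to be most precise.
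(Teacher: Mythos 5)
Your proof is correct, and while it uses the same decomposition as the paper (the two arcs of $\gamma$ between $P$ and $Q$, concatenated with the minimal geodesic $c$) and the same key preliminary fact (a loop of minimal length in $\mathscr{L}(M)\setminus C$ is a smooth closed geodesic, which is part 1 of the paper's proof), the endgame is genuinely different --- in fact dual to the paper's. The paper shows that \emph{both} hybrid loops $\gamma_1\cdot g$ and $\overleftarrow{g}\cdot\gamma_2$ have a corner at $P$ (velocity matching there is excluded by uniqueness of geodesics, since $g\not\subset\gamma$), hence neither is minimal, hence --- having length at most $L(\gamma)$ --- both must lie in $C$; it then pastes explicit free homotopies together to conclude $\gamma\in C$, contradicting the hypothesis. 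You run the dichotomy in the opposite direction: the identity $[a\ast\bar c]\,[c\ast b]=[\gamma]\neq 1$ in $\pi_1(M,P)$ forces at least \emph{one} hybrid loop to be noncontractible; the length comparison then makes that loop a global minimizer in $\mathscr{L}(M)\setminus C$, so the preliminary fact forces smoothness at the junction $Q$, and uniqueness of geodesics (same arrival point, same arrival velocity, same length) identifies $c$ with an arc of $\gamma$, contradicting $c\not\subset\gamma$. What each buys: the paper's route stays entirely at the level of explicit homotopies and never invokes $\pi_1$ or the standard equivalence between free and based null-homotopy, which your argument does rely on; your route avoids the homotopy-pasting construction and extracts slightly more information, namely that every minimal geodesic between points of $\gamma$ actually coincides with one of the two arcs of $\gamma$. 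Two small points of care: your one-line justification of the preliminary fact (``rounding off corners'') should be phrased as the paper does --- replacing any small non-minimizing subarc by the unique minimal geodesic chord in a geodesically convex ball --- since you need the smooth pieces to be geodesics as well, not merely the absence of corners; and in the uniqueness step one should compare unit tangents (or note that $L(c\ast b)=L(\gamma)$ makes the constant speeds equal), since the two curves are a priori parametrised differently.
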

\begin{proof} \begin{enumerate} \item Let us assume that $\gamma$ is of minimal length in $\mathscr{L}(M)\setminus C$ and show that every proportional to arc-length parametrisation of $\gamma$ is a geodesic. Suppose that it is not the case.  Clearly, there exists a proportional to arc-length parametrisation of $\gamma$ and $t_1 < t_2$ with $t_2-t_1$ as small as wanted such that $\gamma_{|[t_1,t_2]}$ is not a geodesic.  Now we can take $t_2-t_1$ small enough so that there exists a unique minimal geodesic $\gamma(t_1)\gamma(t_2):[0,1] \rightarrow M$ between $\gamma(t_1)$ and $\gamma(t_2)$ (see \cite{gallot}). It is clear that $\gamma(t_1)\gamma(t_2)$ is shorter than $\gamma_{|[t_1,t_2]}$ (otherwise $\gamma_{|[t_1,t_2]}$ is a minimal geodesic, which is impossible since it is not a geodesic). 
		Hence the concatenation of $\gamma(t_1)\gamma(t_2)$ with $\gamma \setminus \gamma_{|[t_1,t_2]}$ is a loop $\tilde{\gamma}$ with shorter length than $\gamma$. Now we can take $t_2-t_1$ small enough to have $\gamma_{|[t_1,t_2]}$ and $\gamma(t_1)\gamma(t_2)$ taking values in a common geodesically convex ball. Therefore $\gamma(t_1)\gamma(t_2)$ is homotopic to $\gamma_{|[t_1,t_2]}$, and finally $\gamma$ and $\tilde{\gamma}$ are homotopic. In the end $\tilde{\gamma} \in \mathscr{L}(M)\setminus C$ is shorter than $\gamma$. We have reached a contradiction.
		
		\item Now let us assume there exist $P,Q \in \gamma$ and a minimal geodesic $g$ between $P$ and $Q$, not included in $\gamma$. Without loss of generality let us assume
		$$\gamma: [0,1] \rightarrow M$$
		with
		$$\gamma(0)=\gamma (1)= Q, \gamma(t_P)=P,$$
		and $$ g: [0,1] \rightarrow M$$ with $$g(0)=P,~ g(1)=Q.$$ We define \begin{center}
			$\begin{array}{rlrl}
			\gamma_1 : [0,t_P]  & \rightarrow M &~~~~~~~~~~\gamma_2 : [t_P,1] & \rightarrow M \\
			t & \mapsto \gamma(t) & t & \mapsto  \gamma(t)
			\end{array}$\end{center} the two halves of $\gamma$ connecting $P$ to $Q$. Let us consider $l_1=\gamma_1 \cdot g$ (the concatenation of $\gamma_1$ and $g$), and $l_2= \overleftarrow{g}\cdot \gamma_2$, where $\overleftarrow{g}:t\mapsto g(1-t)$. Because $g$ is a minimal geodesic between $P$ and $Q$ we have $$L(g)\leq L(\gamma_1) \text{ and } L(g) \leq L(\gamma_2),$$ hence \begin{equation} \label{minim} L(l_1)\leq L(\gamma) \text{ and } L(l_2)\leq L(\gamma).\end{equation}
		Now $l_1$ is not $C^\infty$ at $P$: indeed $g'(0)$ cannot be equal to $\gamma'(t_P)$, otherwise we would have
		$$g(t)=\Exp_p \left(\gamma'(t_P)t\right)=\left\{\begin{array}{c}\gamma(t_P+t) \text{ if } t_P+t, \leq 1 \\ \gamma(t_P+t-1) \text{ elsewise.} \end{array}\right.$$
		(Recall a geodesic is completely determined by some initial conditions on position and speed, see e.g. \cite{gallot}). This is impossible since we assumed $g$ is not included in $\gamma$. Since $l_1$ is not $C^\infty$ it cannot be a geodesic, hence $l_1$ is not of minimal length in $\mathscr{L}(M)\setminus C$ (see part $1$ of the proof). Because of \eqref{minim}, it is then clear that $l_1$ belongs to $C$. The same is true for $l_2$. We will now show that $\gamma \in C$ to get a contradiction.
		
		Let us consider $\tilde{\gamma}_1,\tilde{\gamma}_2,\tilde{g},\overleftarrow{\tilde{g}}$ reparametrisations of $\gamma_1,\gamma_2,g,\overleftarrow{g}$ over $[0,1/2]$. It is clear that $\tilde{l}_1:=\tilde{\gamma}_1\cdot \tilde{g}$ and $\tilde{l}_2:=\overleftarrow{\tilde{g}} \cdot \tilde{\gamma}_2$ are reparametrisations over $[0,1]$ of $l_1$ and $l_2$. In particular $\tilde{l}_1, \tilde{l}_2 \in C$ are homotopic to the constant loop $\overline{Q}$.
		
		Let us consider $(s,t)\mapsto l_t^1(s)$ and $(s,t)\mapsto l_t^2(s)$ two free homotopies of loops such that $$l_0^1=\tilde{l}_1, ~ l_1^1=\overline{Q}$$ and $$l_0^2=\tilde{l}_2, ~l_1^2=\overline{Q},$$
		
		Let us now give an explicit free homotopy from $\gamma$ to $\overline{Q}$. For every $0 \leq t \leq 1/2$ let us consider
		\begin{center}
			$\begin{array}{rlrl}
			\tilde{g}_t : [0,1/2]  & \rightarrow M &~~~~~~~~~~\overleftarrow{\tilde{g_t}} : [0,1/2] & \rightarrow M \\
			s & \mapsto \tilde{g}(st) & s & \mapsto  \tilde{g}(t-st).
			\end{array}$
		\end{center}
		
		and define
		$$\gamma_t= \left\{ \begin{array}{ll} \tilde{\gamma_1} \cdot \tilde{g}_t \cdot \overleftarrow{\tilde{g_t}} \cdot \tilde{\gamma}_2, & 0\leq t \leq 1/2, \\
		l^1_{2t-1} \cdot l^2_{2t-1}, & 1/2 \leq t \leq 1. \end{array} \right.$$

		It is clear that for every $t\in [0,1]$, \begin{alignat*}{1} \gamma_t :[0,2] &\rightarrow M \\ s &\mapsto \gamma_t(s)\end{alignat*} is a loop. We reparametrise $\gamma_t$ over $[0,1]$ by setting $\tilde{\gamma}_t(s)=\gamma_t(s/2)$ to obtain a free homotopy of loops such that $\gamma_1=\overline{Q}$ and $\gamma_0$ is a reparametrisation of $\gamma$. In the end $\gamma$ is freely homotopic to $\overline{Q}$, which proves $\gamma \in C$: we have reached a contradiction. \qedhere
	\end{enumerate}
\end{proof}

\begin{Lem} \label{lemcompact}
	Let $M$ be a compact, nonsimply connected Riemannian manifold. There exists a loop $\gamma$ of minimal length in $\mathscr{L}(M)\setminus C$.
\end{Lem}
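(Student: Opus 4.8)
The plan is to show that the number $\ell := \inf\{L(\gamma) : \gamma \in \mathscr{L}(M)\setminus C\}$ is a positive real that is actually attained, by a broken-geodesic compactness argument modelled on the proof of Cartan's theorem. Since $M$ is nonsimply connected, $\mathscr{L}(M)\setminus C \neq \emptyset$, so $\ell < \infty$. First I would establish that $\ell > 0$. As $M$ is compact its injectivity radius $\rho$ is positive (see \cite{gallot}). If a loop $\gamma$ based at a point $p$ has $L(\gamma) < 2\rho$, then every point $q$ of $\gamma$ satisfies $d_M(p,q) \leq L(\gamma)/2 < \rho$, so $\gamma$ lies in the ball $\{x : d_M(p,x) \leq L(\gamma)/2\}$, which is the diffeomorphic image of a Euclidean ball under $\Exp_p$ and hence contractible; thus $\gamma \in C$. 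Contrapositively every element of $\mathscr{L}(M)\setminus C$ has length at least $2\rho$, so $\ell \geq 2\rho > 0$.

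Next I would extract a convenient minimizing sequence. Fix $\gamma_k \in \mathscr{L}(M)\setminus C$ with $L(\gamma_k) \to \ell$, each parametrised proportionally to arc length on $[0,1]$, and set $\Lambda = \ell + 1$, so that $L(\gamma_k) \leq \Lambda$ for $k$ large. Choose an integer $N > \Lambda/\rho$ and put $p_i^k = \gamma_k(i/N)$. Each arc $\gamma_k|_{[(i-1)/N,\,i/N]}$ has length at most $\Lambda/N < \rho$, so its endpoints are joined by a unique minimal geodesic contained in a geodesically convex ball. Replacing each arc by this geodesic produces a closed broken geodesic $\hat\gamma_k$ with $L(\hat\gamma_k) \leq L(\gamma_k)$, and since the replacement is a homotopy rel endpoints inside a convex ball we have $\hat\gamma_k \sim \gamma_k$. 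Hence $\hat\gamma_k \in \mathscr{L}(M)\setminus C$ is still a minimizing sequence, now entirely determined by its vertices $(p_0^k,\ldots,p_{N-1}^k) \in M^N$.

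Because $M^N$ is compact I would pass to a subsequence along which $p_i^k \to p_i$ for every $i$. The closed broken geodesic $\gamma_\infty$ through $(p_0,\ldots,p_{N-1},p_0)$ is well defined, as consecutive limits satisfy $d_M(p_{i-1},p_i) \leq \Lambda/N < \rho$, and being piecewise geodesic it lies in $\mathscr{L}(M)$. The length of a broken geodesic is the continuous function $\sum_i d_M(p_{i-1},p_i)$ of its vertices, so $L(\gamma_\infty) = \lim_k L(\hat\gamma_k) = \ell$.

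It remains to verify $\gamma_\infty \notin C$, which I expect to be the crux of the argument. For $k$ large each $p_i^k$ lies within injectivity radius of $p_i$, so interpolating segment by segment along the minimal geodesics between corresponding vertices yields a free homotopy $\hat\gamma_k \sim \gamma_\infty$; as $\sim$ is an equivalence relation and $\hat\gamma_k \notin C$, we conclude $\gamma_\infty \notin C$. Therefore $\gamma_\infty \in \mathscr{L}(M)\setminus C$ with $L(\gamma_\infty) = \ell$, and the minimum is attained. The main obstacle is precisely this preservation of the nontriviality of the free homotopy class in the limit: it is where compactness is essential, since the positivity of the injectivity radius is exactly what builds the homotopy between nearby broken geodesics and rules out the degeneration of a noncontractible loop into a contractible one.
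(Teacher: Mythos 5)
Your proof is correct, and while it follows the same Cartan-style minimizing-sequence strategy as the paper, the technical route is genuinely different. The paper applies Arzel\`a--Ascoli to the minimizing loops themselves (equicontinuous because they are parametrised proportionally to arc length with bounded lengths), obtains a merely continuous uniform limit $l_\infty$, straightens it into a broken geodesic $\gamma$, and then needs a separate, rather delicate contradiction argument (a pigeonhole over the segments plus the triangle inequality) to show $L(\gamma)=d$. You straighten \emph{first}: replacing each minimizing loop by a closed broken geodesic on $N$ vertices reduces everything to compactness of $M^N$, after which attainment of the infimum is immediate, since the length $\sum_i d_M(p_{i-1},p_i)$ is a continuous function of the vertices; your vertex-interpolation homotopy then plays the role of the paper's ``common convex ball'' argument for preserving the free homotopy class. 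Your route buys a cleaner limit object and dispenses with both Arzel\`a--Ascoli and the paper's final length comparison; it also proves $\ell>0$ via the injectivity radius, a point the paper asserts without justification (though neither proof actually needs positivity for attainment). One small repair: a ball of radius below the injectivity radius need not be geodesically convex (that is governed by the convexity radius), so where you place arcs and their replacing segments in ``geodesically convex balls'' you should either choose $N$ so that $\Lambda/N$ is below the convexity radius --- also uniformly positive on a compact manifold --- or observe that all you use is simple connectedness of the ball $B(p_{i-1}^k,\rho)$, which contains both the arc and the minimal geodesic between its endpoints; either fix is cosmetic and the argument stands.
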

\begin{proof}Let us consider
	$$d:=\inf\{L(l), l \in  \mathscr{L}(M)\setminus C\}.$$
	Since $M$ is not simply connected $\mathscr{L}(M) \setminus C$ is not empty, hence $d>0$ and there exists a sequence $(l_n)_{n\geq 0}$ of $\mathscr{L}(M)\setminus C$ such that
	
	\begin{equation} \label{conv}L(l_n) \underset{n \rightarrow \infty}{\longrightarrow} d. \end{equation}
	Let us reparametrise $l_n$ over $[0,1]$ and proportionally to arc-length in order to have
	
	$\forall t_1\leq t_2 \in [0,1]$,
	$$ d_M (l_n (t_1), l_n (t_2)) \leq \int_{t_1}^{t_2} ||l'_n(t)||_Mdt \leq \sup(L(l_n))(t_2-t_1),$$
	which shows that the set $\{ l_n \}$ is equicontinuous. Because $M$ is compact, for all $t \in[0,1]$, $\{l_n(t) , n \in \mathbb{N}\}$ is relatively compact. By Arzelà-Ascoli theorem we conclude that $\{ l_n\}$ is relatively compact in the uniform topology, hence we can extract a subsequence of $l_n$ which converges uniformly to $l_\infty$ a continuous loop in $M$.
	To obtain a piecewise continuously differentiable curve, we consider a partition of $[0,1]$ $$0=t_0 < t_1 < \cdots < t_m=1$$ with successive times $t_k,t_{k+1}$ close enough to each other so that every $l_{\infty|[t_k,t_{k+1}]}$ takes value in a geodesically convex ball of $M$. We now consider the closed curve $\gamma$ given by the concatenation of all the minimal geodesics $l_\infty(t_k)l_\infty(t_{k+1})$.
	
	Let us notice that for $n$ large enough $l_{n|[t_k,t_{k+1}]}$ takes values in the same geodesically convex ball as $\gamma_{|[t_k,t_{k+1}]}=l_\infty(t_k)l_\infty(t_{k+1})$, hence $l_{n|[t_k,t_{k+1}]}$ and $\gamma_{|[t_k,t_{k+1}]}$ are homotopic. In the end $l_n$ is homotopic to $\gamma$, which shows that $\gamma \in \mathscr{L}(M)\setminus C$, hence $L(\gamma) \geq d$. We will now show that $L(\gamma) = d$ to finish the proof.
	
	Let us assume that $L(\gamma) > d$. We write
	\begin{alignat*}{1}
	L(\gamma) & =L(\gamma)-d+d-L(l_n)+L(l_n).
	\end{alignat*}
	We now use \eqref{conv} to write $d-L(l_n)>-\varepsilon$ for $n$ large enough. We obtain:
	\begin{alignat*}{1}
	L(\gamma) & > L(\gamma) - d - \varepsilon + L(l_n) \\
	\Longleftrightarrow \sum_{k=0}^{m-1} L(\gamma_{|[t_k,t_{k+1}]}) & > \sum_{k=0}^{m-1}\left(\frac{L(\gamma) - d -\varepsilon }{m} + L(l_{n|[t_k,t_{k+1}]})\right),
	\end{alignat*}
	which ensures there exists $k$ such that
	\begin{equation} \label{fineq}
	L(\gamma_{|[t_k,t_{k+1}]}) > \frac{L(\gamma) - d -\varepsilon }{m} + L(l_{n|[t_k,t_{k+1}]}).\\
	\end{equation}
	Now let us recall that $$l_n(t_k)\underset{n \rightarrow \infty}{\longrightarrow} \gamma(t_k),$$  $$l_n(t_{k+1})\underset{n \rightarrow \infty}{\longrightarrow} \gamma(t_{k+1}),$$
	hence for $n$ large enough $$\frac{L(\gamma) - d -\varepsilon }{m}> d_M(l_n(t_k),\gamma(t_k))+d_M(l_n(t_{k+1}),\gamma(t_{k+1})).$$
	
	Together with \eqref{fineq} we obtain
	
	$$ L(\gamma_{|[t_k,t_{k+1}]}) > d_M(l_n(t_k),\gamma(t_k))+d_M(l_n(t_{k+1}),\gamma(t_{k+1}))  + L(l_{n|[t_k,t_{k+1}]}),$$ 
	
	which contradicts the fact that $\gamma_{|[t_k,t_{k+1}]}=\gamma(t_k)\gamma(t_{k+1})$ is a minimal geodesic.
\end{proof}
 
 \section*{Acknowledgements} The author is grateful to Serge Cohen who carefully advised him during this work.
 
\bibliographystyle{plain}
\bibliography{Nil}
 
\end{document}